\title{Hamilton completion and the path cover number of sparse random graphs}
\author{
Yahav Alon
\thanks{School of Mathematical Sciences, Raymond and Beverly Sackler Faculty of Exact Sciences, Tel Aviv University,
Tel Aviv, 6997801, Israel. Email: yahavalo@tauex.tau.ac.il.}
\and Michael Krivelevich
\thanks{School of Mathematical Sciences, Raymond and Beverly
Sackler Faculty of Exact Sciences, Tel Aviv University, Tel Aviv,
6997801, Israel. Email: krivelev@tauex.tau.ac.il. Partially supported by USA-Israel BSF grant 2018267.}
}
\begin{document}
\maketitle
\newtheorem{thm}{Theorem}
\newtheorem{propos}{Proposition}
\newtheorem{defin}{Definition}
\newtheorem{lemma}{Lemma}[section]
\newtheorem{corol}{Corollary}[section]
\newtheorem{corol*}{Corollary}
\newtheorem{thmtool}{Theorem}[section]
\newtheorem{corollary}[thmtool]{Corollary}
\newtheorem{lem}[thmtool]{Lemma}
\newtheorem{defi}[thmtool]{Definition}
\newtheorem{prop}[thmtool]{Proposition}
\newtheorem{clm}[thmtool]{Claim}
\newtheorem{defini}[thmtool]{Definition}
\newtheorem{conjecture}{Conjecture}
\newtheorem{problem}{Problem}
\newcommand{\Proof}{\noindent{\bf Proof.}\ \ }
\newcommand{\Remarks}{\noindent{\bf Remarks:}\ \ }
\newcommand{\Remark}{\noindent{\bf Remark:}\ \ }

\newcommand{\Dist}[1]{\mathsf{#1}}
\newcommand{\Bin}{\Dist{Bin}}
\newcommand{\HH}{\mathcal{H}}
\newcommand{\pr}{\mathbb{P}}
\newcommand{\sm}{\text{SMALL}}
\newcommand{\cl}{\text{CLOSE}}
\newcommand{\bd}{\text{BAD}}
\newcommand{\lrg}{\text{LARGE}}
\newcommand{\enm}{\text{END}_M}

\begin{abstract}
We prove that for every $\varepsilon > 0$ there is $c_0$ such that if $G\sim G(n,c/n)$, $c\ge c_0$, then with high probability $G$ can be covered by at most $(1+\varepsilon)\cdot \frac{1}{2}ce^{-c} \cdot n$ vertex disjoint paths, which is essentially tight. This is equivalent to showing that, with high probability, at most $(1+\varepsilon)\cdot \frac{1}{2}ce^{-c} \cdot n$ edges can be added to $G$ to create a Hamiltonian graph.
\end{abstract}

\section{Introduction} \label{sec-intro} 

Consider the binomial random graph model $G(n,p)$, in which every edge of the complete graph $K_n$ is retained with probability $p$, independently of all other edges. A \textit{Hamilton cycle} in a graph $G$ is a simple cycle in $G$ that covers the entire vertex set of $G$, and a \textit{Hamiltonian graph} is a graph that contains a Hamilton cycle. A classical result by Koml\'{o}s and Szemer\'{e}di \cite{KS83}, and independently by Bollob\'{a}s \cite{B84}, states that if $p=p(n)=(\log n + \log \log n + f(n))/n$ then
\begin{eqnarray*}
\lim _{n\to \infty} \pr (G(n,p)\text{ is Hamiltonian}) =
	\begin{cases}
        1 & \text{if } f(n) \to \infty ;\\
        0 & \text{if } f(n) \to -\infty .
    \end{cases}
\end{eqnarray*}

In light of this, one cannot expect a Hamilton cycle to exist in a random graph $G(n,p)$ when $p$ is below the sharp threshold stated above. This invites a quantitative question: typically, how close is a random graph below the threshold to being Hamiltonian? To answer this question a measure of ``distance" from Hamiltonicity should first be defined.

One example of such a measure is the maximum length of a cycle in the graph $L_{\max}(G)$, where if $p$ is above the Hamiltonicity threshold then typically $L_{\max}(G(n,p)) = n$. Frieze \cite{F86} showed that if $c>0$ is a large enough constant with respect to $\varepsilon$ then typically
$$
L_{\max}(G(n,c/n)) \ge (1-(1+\varepsilon )ce^{-c})n.
$$
Up to the value of $\varepsilon$, this is a tight bound. This is due to the fact that, with high probability, there are $(c+1+o(1))\cdot e^{-c}n$ vertices of degree less than 2 in $G(n,c/n)$, and such vertices cannot be a part of any cycle. Following that, Anastos and Frieze \cite{AF21} extended this result by showing that, in fact, there is a function $f$ such that $L_{\max}(G(n,c/n))/n \xrightarrow{a.s.} f(c)$. The function $f(c)$ can be expressed as a series, with explicitly computable terms, the first few of which are computed in the paper.

In a recent paper Nenadov, Sudakov and Wagner \cite{NSW} presented a general measure for the distance of a graph from a given property: deficiency. The \textit{deficiency} of a graph $G$ with respect to a graph property $\mathcal{P}$, denoted $\text{def}(G,\mathcal{P} )$, is equal to the smallest integer $t$ such that $G * K_t$ has the property $\mathcal{P}$. Here, $G*H$ is the \emph{join} of $H$ and $G$, that is, the graph with vertex set $V(G) \uplus V(H)$ and edge set $E(G)\cup E(H) \cup (G\times H)$.

With respect to the property of Hamiltonicity, the notion of deficiency is equivalent to other natural measure of distance from Hamiltonicity: it is also equal to the distance in edges of a graph from the nearest Hamiltonian graph. In other words, the deficiency of $G$ with respect to Hamiltonicity is equal to the smallest integer $t$ such that there is a graph $H$ with $e(H)=t$ and $G\cup H$ is Hamiltonian. Evidently, this is also equal to the \emph{path cover number} of $G$, defined as follows.

\begin{defin}
Let $G$ be a graph. The \emph{path cover number} of $G$, denoted by $\mu (G)$, is 0 if $G$ is Hamiltonian, and otherwise it is the minimal number $k\in \mathbb N$ such that there is a covering of $V(G)$ by $k$ vertex disjoint paths.
\end{defin}

Henceforth we will denote $\text{def}(G,\HH )$, where $\HH$ is the property of Hamiltonicity, as $\mu (G)$, as the two quantities are equal.

In the above mentioned paper Nenadov et al. presented a (tight) combinatorial result by presenting an inequality involving $|V(G)|,|E(G)|,\mu (G)$ and showing that it holds for every graph $G$, as well as finding examples of $G$ of every size for which the inequality is an equality.

When random graphs are considered, an immediate bound on $\mu (G)$, similar to the bound on the maximum cycle length, can be derived from the fact that every path in a disjoint path covering of $G$ contains at most two vertices of degree 1 or a single vertex of degree 0. From this we get that, with high probability for $G\sim G(n,c/n)$, one has $\mu (G) \ge (\frac{1}{2}c+1+o(1))\cdot e^{-c}n$. Like in the case of the maximum cycle length, this trivial bound turns out to be tight, as shown in the following theorem, which is our result of this paper.

\begin{thm}\label{main}
For every $\varepsilon > 0$ there is a positive $c_0=c_0(\varepsilon )$ such that, if $G\sim G(n,c/n)$, $c\ge c_0$, then with high probability $$(1-\varepsilon)\cdot \frac{1}{2}ce^{-c} \cdot n \le \mu (G) \le (1+\varepsilon)\cdot \frac{1}{2}ce^{-c} \cdot n.$$
\end{thm}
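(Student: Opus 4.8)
The lower bound is the easy direction, handled by the degree-counting argument sketched in the introduction: with high probability $G(n,c/n)$ has $(\tfrac12 c + 1 + o(1))e^{-c}n$ vertices of degree at most $1$, each path in any cover contributes at most two such vertices (or is a single isolated vertex), so $\mu(G) \ge (1-\varepsilon)\tfrac12 ce^{-c}n$ once $c_0$ is large. The real work is the upper bound, and here the natural strategy is a \emph{rotation--extension / absorption} argument adapted to the sparse regime. The plan is to show that one can cover $V(G)$ by a family of vertex-disjoint paths whose number is within $(1+\varepsilon)$ of $\tfrac12 ce^{-c}n$; equivalently, after designating the low-degree vertices (which must be path endpoints), one wants to connect up the remaining structure into a near-optimal number of paths.

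First I would isolate the set $\text{SMALL}$ of vertices of degree $\le 1$ and argue that w.h.p.\ these are "scattered": no two lie close together, and each is attached to a bounded-degree tree-like neighborhood, so that they cause no real obstruction beyond the obvious one. Next, on the bulk of the graph I would expose it in two rounds, $G = G_1 \cup G_2$ with $G_1 \sim G(n, c_1/n)$ and $G_2 \sim G(n, c_2/n)$ for suitable $c_1 + c_2 \approx c$: use $G_1$ (with $c_1$ a large constant) to produce, via a Pósa-type rotation--extension argument together with an expansion property of $G_1$ restricted to its $2$-core, a path cover of the $2$-core with only $o(\varepsilon c e^{-c} n)$ paths — here one leans on the standard fact that sparse random graphs of large constant average degree expand well enough for the rotation argument to either extend a path or create many endpoints, forcing long paths. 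Then I would attach the pendant trees and the $\text{SMALL}$ vertices back on, each such attachment increasing the path count by a controlled amount that telescopes to at most $(\tfrac12 c + o_c(1))e^{-c}n$ extra paths. Finally, $G_2$ is used as a "booster": given any near-spanning collection of paths, its random edges let one merge pairs of paths (connecting an endpoint of one to an endpoint of another) greedily, reducing the number of paths until the bottleneck set $\text{SMALL}$ is saturated — this is the sprinkling step that closes the gap to $(1+\varepsilon)\tfrac12 c e^{-c} n$.

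The main obstacle I expect is controlling the interaction between the rotation--extension machinery and the genuinely sparse, locally-tree-like parts of the graph: in $G(n,c/n)$ with $c$ a large constant the graph is \emph{not} a good enough expander globally (it has a linear number of low-degree vertices and many pendant paths), so one cannot apply the classical "connected expanders are Hamiltonian-ish" results as a black box. The fix is to do the analysis on the $2$-core, or on the subgraph induced by vertices of degree $\ge 3$, where one does recover expansion with high probability, prove a near-spanning-path-cover statement there, and then carefully account for the cost of re-attaching the peripheral structure; bounding that cost by $(1+o_c(1))\cdot\tfrac12 c e^{-c} n$ — matching the lower bound up to the $\varepsilon$ — is where the delicate counting lives. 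A secondary technical point is ensuring the two-round exposure is legitimate and that the "booster" edges in $G_2$ are conditionally independent of the path structure built from $G_1$, which is handled by the usual multi-round exposure coupling ($G(n,c_1/n)\cup G(n,c_2/n)$ stochastically contains, and is contained in, $G(n,c/n)$ for appropriate parameters).
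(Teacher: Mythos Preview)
Your lower bound matches the paper's. For the upper bound, your plan diverges from the paper's and has a real gap at the decisive step. The paper does \emph{not} use two-round exposure or sprinkling at all. Its key device is to pair up the degree-1 vertices \emph{before} running any rotation--extension argument: it passes to an auxiliary graph $G^*$ (essentially the connected 2-core with a small set of problematic vertices removed), adds a matching $M$ on $N_G(V_1)\cap V(G^*)$, and proves that $G^*$ is ``$M$-Hamiltonian'', i.e.\ contains a Hamilton cycle through every edge of $M$. This is done by showing $G^*$ contains a sparse $M$-expander and that $G^*$ itself supplies enough $M$-boosters (via $M$-respecting P\'osa rotations, which never pivot on a matched vertex). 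Replacing each $M$-edge $\{u,v\}$ by the 3-path $(u,u',v',v)$ through the pendant leaves gives a cycle of length $\ge (1-\tfrac12\varepsilon ce^{-c})n$ in $G\cup M'$, and deleting the $\tfrac12|V(M)|$ edges of $M'$ leaves the desired $(1+\varepsilon)\tfrac12 ce^{-c}n$ paths. The factor $\tfrac12$ is thus baked in by the matching from the start, and the entire rotation argument lives on a single large expander where long paths are always available.

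In your scheme the factor $\tfrac12$ is supposed to come out of the final ``merge via $G_2$'' step, and this is where it breaks. After you attach the $\sim ce^{-c}n$ degree-1 vertices to a near-Hamiltonian structure on the 2-core, you have $\sim ce^{-c}n$ paths, each with one immovable leaf endpoint and one ``free'' 2-core endpoint; halving the count requires matching up the free endpoints. But those free endpoints form a set of size $O(ce^{-c}n)$, and $G_2\sim G(n,c_2/n)$ restricted to it has only $O(c_2 c^2 e^{-2c}n)$ edges in expectation, not the $\Theta(ce^{-c}n)$ you need. Rotations do not rescue this directly: each post-attachment path has length only about $e^{c}/c$ on average, so rotating a free endpoint within its own path yields far fewer than $\Theta(n)$ candidates, which is what the usual booster argument needs. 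Your sentence that the attachments ``telescope to at most $(\tfrac12 c + o_c(1))e^{-c}n$ extra paths'' is therefore unsupported --- naive attachment costs $ce^{-c}n$, and getting down to $\tfrac12 ce^{-c}n$ is exactly the content of the theorem. (A secondary bookkeeping issue: in the split $G=G_1\cup G_2$ a vertex of degree 1 in $G$ need not have degree 1 in $G_1$, so ``work in $G_1$, then attach $V_1$'' is not cleanly defined.) The missing idea is precisely the paper's: encode the pairing of leaves as a matching $M$ \emph{inside} the expander and demand a Hamilton cycle that contains $M$, so that the rotation--extension runs on one long path rather than on a linear number of short fragments.
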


In Section \ref{sec-per} we introduce notation and definitions to be used later in the paper, as well as some auxiliary results required for our proof. In Section \ref{sec:proof} we present our proof for Theorem \ref{main}.

\section{Preliminaries} \label{sec-per}

Throughout the paper we use the following graph theoretic notation.
For a graph $G=(V,E)$ and two disjoint vertex subsets $U,W\subseteq V$, $E_G(U,W)$ denotes the set of edges of $G$ adjacent to exactly one vertex from $U$ and one vertex from $V$, and $e_G(U,W)=|E_G(U,W)|$.
Similarly, $E_G(U)$ denotes the set of edges spanned by a subset $U$ of $V$, and $e_G(U)$ its size.
The (external) neighbourhood of a vertex subset $U$, denoted by $N_G(U)$, is the set of vertices in $V\setminus U$ adjacent to a vertex of $U$.
The degree of a vertex $v\in V$, denoted by $d_G(v)$, is the number of edges of $G$ incident to $v$, and $d_G(v,U)$ is the number of vertices in $U$ adjacent to $v$. $\Delta (G)$ is the maximum degree of a vertex in $G$, that is, $\Delta (G) =\max _{v\in V}d_G(v)$.

For $u,v\in G$ we let $\text{dist}_G(u,v)$ denote the distance in $G$ between $u$ and $v$, that is the length of a shortest path in $G$ connecting them (or $\text{dist}_G(u,v)=\infty$ if $u$ and $v$ are not connected by any path), where $\text{dist}_G(v,v)$ is defined to be the minimal length of a cycle containing $v$ (or $\text{dist}_G(v,v) = \infty$ if $v$ is not contained in any cycle). 

While using the above notation we occasionally omit $G$ if the identity of the specific graph is clear from the context.

We suppress the rounding signs to simplify the presentation.

The following auxiliary results are required for our proof.

\begin{lemma}\label{binom-coeff}
{\em (Bounds on binomial coefficients, see e.g. \cite{FK}, Chapter 21.1)}
The following inequalities hold for every $n\ge a,k \ge b$.

\begin{enumerate}
\item $\binom{n}{k} \le \left( \frac{en}{k} \right) ^k$\,;\label{binom-coeff-1}
\item $\frac{\binom{n-a}{k-b}}{\binom{n}{k}} \le \left( \frac{k}{n} \right) ^b \cdot \left( \frac{n-k}{n-b} \right) ^{a-b}$\,.\label{binom-coeff-2}
\end{enumerate}
\end{lemma}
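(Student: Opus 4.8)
The plan is to establish the two inequalities by the standard short arguments; neither requires the random‑graph machinery developed elsewhere in the paper.

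\medskip
\noindent\textbf{Item~\ref{binom-coeff-1}.} I would start from the crude estimate $\binom{n}{k}=\frac{n(n-1)\cdots(n-k+1)}{k!}\le \frac{n^k}{k!}$, so it suffices to prove $k!\ge (k/e)^k$. This is immediate from the power series $e^{k}=\sum_{j\ge 0}k^j/j!\ge k^k/k!$, which rearranges to $k!\ge k^k e^{-k}=(k/e)^k$. Substituting back gives $\binom{n}{k}\le n^k\,(e/k)^k=(en/k)^k$.

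\medskip
\noindent\textbf{Item~\ref{binom-coeff-2}.} We may assume the binomial coefficient $\binom{n-a}{k-b}$ is nonzero, i.e.\ $0\le k-b\le n-a$ (otherwise the left-hand side is $0$ and there is nothing to prove); in particular $b\le k\le n$. I would treat the main case $b\le a$, the case $a<b$ being entirely analogous with the roles of certain numerator and denominator factors swapped. Expanding into factorials and cancelling,
\[
\frac{\binom{n-a}{k-b}}{\binom{n}{k}}=\frac{k!}{(k-b)!}\cdot\frac{(n-k)!}{(n-k-a+b)!}\cdot\frac{(n-a)!}{n!}=\frac{\Big(\prod_{i=0}^{b-1}(k-i)\Big)\Big(\prod_{j=0}^{a-b-1}(n-k-j)\Big)}{\prod_{\ell=0}^{a-1}(n-\ell)},
\]
a ratio of two products with exactly $a$ factors each, and all denominator factors are positive since $n\ge a$.

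\medskip
The key step is then to match the factors in pairs. I would pair the $b$ denominator factors $n,n-1,\dots,n-b+1$ with the numerator factors $k,k-1,\dots,k-b+1$: since $k\le n$, the map $i\mapsto \frac{k-i}{n-i}$ is non-increasing on $\{0,\dots,b-1\}$, so each such pair contributes at most $\frac{k}{n}$, i.e.\ at most $(k/n)^b$ in total. The remaining $a-b$ denominator factors $n-b,n-b-1,\dots,n-a+1$ I would pair with the numerator factors $n-k,n-k-1,\dots,n-k-(a-b)+1$: since $b\le k$, the map $j\mapsto \frac{n-k-j}{n-b-j}$ is non-increasing on $\{0,\dots,a-b-1\}$ (and all its denominators are at least $n-a+1>0$), so each such pair contributes at most $\frac{n-k}{n-b}$, i.e.\ at most $\big(\tfrac{n-k}{n-b}\big)^{a-b}$ in total. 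Multiplying the two bounds yields $\frac{\binom{n-a}{k-b}}{\binom{n}{k}}\le \big(\tfrac{k}{n}\big)^{b}\big(\tfrac{n-k}{n-b}\big)^{a-b}$.

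\medskip
\noindent\textbf{Main obstacle.} There is essentially no obstacle: both are textbook estimates. The only care needed is the bookkeeping in Item~\ref{binom-coeff-2} — checking that the numerator and denominator each split into exactly $a$ consecutive‑integer factors, choosing the pairing so that precisely the two target ratios $k/n$ and $(n-k)/(n-b)$ appear, and getting the two monotonicity directions right (these rely exactly on the hypotheses $k\le n$ and $b\le k$). Degenerate situations (an empty product when $b=0$ or $a=b$, or $\binom{n-a}{k-b}=0$) are consistent with the claimed inequality and need no separate treatment.
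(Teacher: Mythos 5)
Your proof is correct and complete; the paper itself gives no argument for this lemma, simply citing \cite{FK}, and your two derivations (the $k!\ge (k/e)^k$ bound for the first item, and the factor-by-factor pairing with the monotonicity checks relying precisely on $k\le n$ and $b\le k$ for the second) are exactly the standard ones behind the cited statement.
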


\begin{lemma}\label{binom-rv}
{\em (Bounds on binomial random variables, a corollary of Lemma \ref{binom-coeff})}
Let $1\le k \le n$ be integers, $0 < p < 1$, and let $X\sim \Bin (n,p)$. Then the following inequalities hold:

\begin{enumerate}
\item $\pr (X \ge k) \le \left( \frac{enp}{k} \right) ^k$\,;\label{binom-rv-1}
\item $\pr (X = k) \le \left( \frac{enp}{k(1-p)} \right) ^k\cdot e^{-np}$\,.\label{binom-rv-2}
\end{enumerate}

\noindent If, additionally, $k\le np$, then 

\begin{enumerate}[resume]
\item $\pr (X \le k) \le (k+1)\cdot \left( \frac{enp}{k(1-p)} \right) ^k\cdot e^{-np}$\,.\label{binom-rv-3}
\end{enumerate}

\end{lemma}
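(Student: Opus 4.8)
The plan is to derive all three inequalities directly from Lemma \ref{binom-coeff}(\ref{binom-coeff-1}) together with the elementary estimate $1-p\le e^{-p}$, treating the parts in order. For part \ref{binom-rv-1}, I would use a union bound over the $k$-element subsets of $[n]$: if $X\ge k$ then the set of successes contains some $k$-subset $S$, and the probability that all elements of a fixed $S$ are successes is exactly $p^k$, so $\pr(X\ge k)\le \binom nk p^k$. Applying $\binom nk\le (en/k)^k$ from Lemma \ref{binom-coeff}(\ref{binom-coeff-1}) immediately gives $\pr(X\ge k)\le (enp/k)^k$.

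For part \ref{binom-rv-2}, I would start from the exact identity $\pr(X=k)=\binom nk p^k(1-p)^{n-k}$. Bounding the binomial coefficient by $(en/k)^k$ as before, and rewriting $(1-p)^{n-k}=(1-p)^n(1-p)^{-k}\le e^{-np}(1-p)^{-k}$ via $1-p\le e^{-p}$ (which holds for all $0<p<1$, so no further hypothesis is needed), yields $\pr(X=k)\le (en/k)^k p^k e^{-np}(1-p)^{-k}=\left(\frac{enp}{k(1-p)}\right)^k e^{-np}$, as claimed.

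For part \ref{binom-rv-3}, the extra hypothesis $k\le np$ is used to compare point probabilities. For $1\le j\le k$ the ratio of consecutive point masses is $\frac{\pr(X=j)}{\pr(X=j-1)}=\frac{(n-j+1)p}{j(1-p)}$, which is at least $1$ precisely when $j\le (n+1)p$; since $j\le k\le np<(n+1)p$, the sequence $\pr(X=0),\dots,\pr(X=k)$ is nondecreasing, hence every term is at most $\pr(X=k)$. (Equivalently, one can bound $\binom nj/\binom nk$ by $\bigl(k/(n-k+1)\bigr)^{k-j}$ termwise and multiply by $\bigl((1-p)/p\bigr)^{k-j}$ to reach the same conclusion.) Therefore $\pr(X\le k)=\sum_{j=0}^k\pr(X=j)\le (k+1)\pr(X=k)$, and substituting the bound from part \ref{binom-rv-2} completes the argument.

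There is no genuine obstacle here; the items to watch are purely bookkeeping. In part \ref{binom-rv-2} one must not approximate $(1-p)^{n-k}$ by anything sharper than $e^{-np}(1-p)^{-k}$, so that the bound stays valid when $p$ is close to $1$ and when $k=n$. In part \ref{binom-rv-3} the chain $j\le k\le np$ is exactly what makes the point masses nondecreasing up to index $k$; if $k$ exceeded $np$ the mode could lie strictly between the relevant indices and the crude $(k+1)\pr(X=k)$ bound would no longer dominate the whole left tail.
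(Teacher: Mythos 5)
Your proof is correct and follows exactly the route the paper intends: the paper gives no explicit proof of this lemma, merely labelling it a corollary of Lemma \ref{binom-coeff}, and your derivation (union bound over $k$-subsets plus $\binom{n}{k}\le (en/k)^k$ for the first two parts, and monotonicity of the point masses $\pr(X=j)$ for $j\le k\le np$ for the third) is the standard way to obtain all three bounds from that lemma. The bookkeeping you flag, in particular keeping the $(1-p)^{-k}$ factor intact in part \ref{binom-rv-2} and using $j\le(n+1)p$ for the ratio comparison in part \ref{binom-rv-3}, is handled correctly.
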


\begin{lemma}\label{chernoff}
{\em (Chernoff bound for binomial lower tail, see e.g. \cite{CHER})} Let $X\sim \Bin (n,p)$ and $\delta >0$. Then
$$
\pr (X <(1-\delta )np) \le \exp \left( -\frac{\delta ^2 np}{2} \right) .
$$
\end{lemma}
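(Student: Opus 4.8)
The statement is the classical Chernoff lower-tail bound, so the plan is to reproduce the standard exponential-moment argument. First I would dispose of the trivial range: if $\delta \ge 1$ then $(1-\delta)np \le 0$, and since $X \ge 0$ with probability $1$ the event $\{X < (1-\delta)np\}$ is empty (for $\delta = 1$ it is $\{X < 0\} = \emptyset$), so the claimed inequality holds vacuously. Hence from now on I may assume $0 < \delta < 1$.

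For a parameter $t > 0$ to be optimised later, the plan is to apply Markov's inequality to the random variable $e^{-tX}$: since $x \mapsto e^{-tx}$ is decreasing,
$$\pr\bigl(X < (1-\delta)np\bigr) \le \pr\bigl(e^{-tX} \ge e^{-t(1-\delta)np}\bigr) \le e^{t(1-\delta)np}\,\mathbb{E}\bigl[e^{-tX}\bigr].$$
Then I would compute the moment generating function by writing $X = \sum_{i=1}^n X_i$ with $X_1,\dots,X_n$ independent $\Bin(1,p)$ random variables, which gives $\mathbb{E}[e^{-tX}] = \bigl(1 - p(1-e^{-t})\bigr)^n$, and bound this by $\exp\bigl(-np(1-e^{-t})\bigr)$ using $1 + x \le e^x$. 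Combining the two displays yields
$$\pr\bigl(X < (1-\delta)np\bigr) \le \exp\Bigl(np\bigl(t(1-\delta) - 1 + e^{-t}\bigr)\Bigr).$$

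The next step is to minimise the exponent over $t > 0$. Differentiating, the minimum of $t \mapsto t(1-\delta) - 1 + e^{-t}$ is attained at $t^\ast = -\ln(1-\delta)$, which is positive precisely because $0 < \delta < 1$ (this is the only place the restriction on $\delta$ enters), and there the exponent equals $-np\bigl((1-\delta)\ln(1-\delta) + \delta\bigr)$. It then remains to check the elementary real-variable inequality $(1-\delta)\ln(1-\delta) + \delta \ge \delta^2/2$ on $[0,1)$; I would do this by setting $h(\delta) := (1-\delta)\ln(1-\delta) + \delta - \delta^2/2$ and observing that $h(0) = 0$, $h'(0) = 0$, and $h''(\delta) = \delta/(1-\delta) \ge 0$, so $h'$ is nondecreasing and hence nonnegative, whence $h$ is nondecreasing and hence nonnegative. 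Substituting $t = t^\ast$ and then this bound into the previous display gives $\pr\bigl(X < (1-\delta)np\bigr) \le \exp(-\delta^2 np/2)$, as required.

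There is no genuine obstacle here: the argument is entirely routine, and the only points requiring a word of care are verifying the sign of the optimiser $t^\ast$ and the short convexity check for the final scalar inequality.
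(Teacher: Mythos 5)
Your proof is correct and is precisely the standard Cramér--Chernoff exponential-moment argument (with the right optimiser $t^\ast=-\ln(1-\delta)$ and a valid convexity check of $(1-\delta)\ln(1-\delta)+\delta\ge\delta^2/2$); the paper itself states this lemma as a known result cited from the literature and gives no proof, and your argument matches the classical one found in the cited sources.
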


\begin{lemma} \label{lemma:const-degree-vrtcs} {\em (see e.g. \cite{FK}, Theorem 3.3)}
Let $d\in \mathbb{N}$, $c>0$ and let $G\sim G(n,c/n)$. Then, with high probability,
$$
\left| |\{ v\in G \mid d(v) = d \} | - e^{-c} \cdot \frac{c^d}{d!} \cdot n \right| = o\left( n^{0.6} \right).
$$
\end{lemma}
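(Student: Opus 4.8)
The plan is to apply the second moment method. Write $X = |\{v \in V(G) : d_G(v) = d\}|$ and set $\lambda := e^{-c}\frac{c^d}{d!}n$ and $p := c/n$; we must show $|X - \lambda| = o(n^{0.6})$ with high probability. Throughout, the constants hidden in the $O(\cdot)$ notation may depend on the fixed parameters $c$ and $d$.

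First I would estimate $\mathbb E[X]$. Since $d_G(v) \sim \Bin(n-1,p)$ for every vertex $v$,
\[
\mathbb E[X] = n\binom{n-1}{d}p^d(1-p)^{n-1-d}.
\]
Using $\binom{n-1}{d}p^d = \frac{c^d}{d!}\prod_{j=1}^{d}\bigl(1-\tfrac{j}{n}\bigr) = \frac{c^d}{d!}\bigl(1+O(1/n)\bigr)$ and $(1-c/n)^{n-1-d} = e^{-c}\bigl(1+O(1/n)\bigr)$, one obtains $\mathbb E[X] = \lambda\bigl(1+O(1/n)\bigr) = \lambda + O(1)$. Hence $|\mathbb E[X] - \lambda| = O(1) = o(n^{0.6})$, and it suffices to prove that $|X - \mathbb E[X]| = o(n^{0.6})$ with high probability.

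Next I would bound $\mathrm{Var}(X)$. Writing $X = \sum_v \mathbf 1[d_G(v)=d]$, the $n$ diagonal terms contribute at most $n/4$. For an ordered pair of distinct vertices $u,v$, conditioning on whether the edge $uv$ is present makes the events $\{d_G(u)=d\}$ and $\{d_G(v)=d\}$ depend on disjoint sets of edges; writing $q_k := \pr(\Bin(n-2,p)=k)$, a direct calculation then gives
\[
\mathrm{Cov}\bigl(\mathbf 1[d_G(u)=d],\,\mathbf 1[d_G(v)=d]\bigr) = p(1-p)\,(q_d - q_{d-1})^2 .
\]
Since $q_d, q_{d-1} \le 1$ and $p(1-p) = O(1/n)$, each of the fewer than $n^2$ covariances is $O(1/n)$, so $\mathrm{Var}(X) = O(n)$. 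Now Chebyshev's inequality with deviation $n^{0.55}$ gives
\[
\pr\bigl(|X-\mathbb E[X]| \ge n^{0.55}\bigr) \le \frac{\mathrm{Var}(X)}{n^{1.1}} = O(n^{-0.1}) = o(1),
\]
so with high probability $|X - \mathbb E[X]| < n^{0.55}$; combined with the first-moment estimate this yields $|X-\lambda| \le n^{0.55} + O(1) = o(n^{0.6})$, as claimed.

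I do not anticipate any real obstacle. The only step requiring some care is the variance bound, where the key point is that the pairwise covariances are $O(1/n)$ rather than $\Theta(1)$ — this is what makes $\mathrm{Var}(X)$ linear rather than quadratic in $n$ — and everything else is a routine first-moment expansion together with an application of Chebyshev's inequality.
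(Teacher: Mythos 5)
Your proof is correct: the paper does not prove this lemma but simply cites it (Theorem 3.3 of Frieze--Karo\'nski), and your second-moment argument --- the first-moment expansion giving $\mathbb E[X]=\lambda+O(1)$, the exact covariance identity $p(1-p)(q_d-q_{d-1})^2=O(1/n)$ yielding $\mathrm{Var}(X)=O(n)$, and Chebyshev at deviation $n^{0.55}$ --- is essentially the standard proof of that cited result. No gaps.
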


\begin{defini}\label{def:2core}
The \emph{connected 2-core} of a graph $G$, denoted $C_2(G)$, is the 2-core of the largest connected component of $G$. That is, the graph $C_2(G)$ is the maximum size subgraph of the largest connected component in which all vertices have degree at least 2. If $G$ does not have a unique largest component then $C_2(G)=\emptyset$.
\end{defini}

\begin{lemma} \label{lemma:sizeof2core} {\em (see e.g. \cite{FK}, Lemma 2.16)}
Let $\varepsilon > 0$. Then there is $C=C(\varepsilon )>1$ such that, if $c\ge C$, $x<1$ is the unique solution  in $(0,1)$ to $xe^{-x}=ce^{-c}$, and $G\sim G(n,c/n)$, then, with high probability,
$$
|V(C_2(G))| = (1-x)\cdot \left( 1-\frac{x}{c} +o(1) \right) \cdot n.
$$

Additionally, the solution $x$ satisfies $x=ce^{-c}+c^2e^{-2c}+o(e^{-2c})$. Therefore, with high probability,
$$
|V(C_2(G))| \ge (1-(c+1)e^{-c}-(1+\varepsilon )c^2e^{-2c})n.
$$
\end{lemma}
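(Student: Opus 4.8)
The plan is to combine a first--moment estimate, coming from the local (Benjamini--Schramm) structure of $G=G(n,c/n)$, with a concentration argument, and then to read off the explicit bounds by a short Taylor expansion. First I would fix the relevant branching--process constants: let $q=q(c)\in(0,1)$ be the extinction probability of a Galton--Watson tree with $\mathrm{Poisson}(c)$ offspring, i.e.\ the unique root in $(0,1)$ of $q=e^{-c(1-q)}$. Writing $x:=cq$, the identity $q=e^{-c}\cdot e^{cq}$ rearranges to $(cq)e^{-cq}=ce^{-c}$, so $x$ is exactly the solution in $(0,1)$ of $xe^{-x}=ce^{-c}$ (the other solution being $x=c>1$). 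I would then invoke the classical facts that for $c$ large, whp $G$ has a unique largest component $G_1$ with $|V(G_1)|=(1-q)n+o(n)$ while every other component has $O(\log n)$ vertices, and that $C_2(G)$ is obtained from $G_1$ by iteratively deleting vertices of degree at most $1$ --- so the deleted vertices are precisely those lying in the pendant trees hanging off the $2$-core.

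The heart of the argument is to show that for a fixed vertex $v$,
$$
\pr\bigl(v\in C_2(G)\bigr)=(1-x)\Bigl(1-\tfrac{x}{c}\Bigr)+o(1).
$$
Exposing a breadth--first ball of slowly growing radius $r=r(n)\to\infty$ around $v$, this ball converges in distribution to the depth-$r$ truncation of the $\mathrm{Poisson}(c)$ Galton--Watson tree $T$. Up to an $o(1)$ error, the event $\{v\in C_2(G)\}$ is the event that at least two of the subtrees rooted at the neighbours of $v$ reach depth $r$: a subtree reaching that far whp continues on into $G_1$ (giving $v$ two internally disjoint paths to the $2$-core), whereas a subtree that dies out inside the ball is a genuine pendant tree on that side; the exceptional events (a branch reaching depth $r$ but lying in a small component, or two surviving branches that reconnect close to $v$) contribute only $o(1)$. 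In $T$ the root has $\mathrm{Poisson}(c)$ subtrees, each surviving independently with probability $1-q$, so the number of surviving branches is $\mathrm{Poisson}(c(1-q))$, and hence $\pr(v\in C_2(G))=1-e^{-c(1-q)}-c(1-q)e^{-c(1-q)}+o(1)$; substituting $e^{-c(1-q)}=q=x/c$ and $c(1-q)=c-x$ collapses the main term to $(1-x)(1-x/c)$. Summing over $v$ gives $\mathbb{E}\,|V(C_2(G))|=(1-x)(1-x/c)n+o(n)$. For the concentration I would use the second moment method: for two independent uniform vertices $u,v$ their radius-$r$ neighbourhoods are whp disjoint and conditionally distributed like two independent copies of $T$, so $\pr(u,v\in C_2(G))=\pr(v\in C_2(G))^2+o(1)$, giving $\mathrm{Var}\,|V(C_2(G))|=o(n^2)$, and Chebyshev then yields $|V(C_2(G))|=(1-x)(1-\tfrac{x}{c}+o(1))n$ whp. (Alternatively, one can run an edge--exposure Doob martingale and apply Azuma's inequality after truncating the rare event that a single edge switch moves the $2$-core size by more than $\omega(\log n)$.)

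For the ``additionally'' part, which is pure real analysis: since $x\to 0$ as $c\to\infty$, write $x=ce^{-c}(1+\delta)$ with $\delta=\delta(c)\to 0$; plugging into $xe^{-x}=ce^{-c}$ and using $e^{-x}=1-ce^{-c}+O(c^2e^{-2c})$ gives $\delta=ce^{-c}+O(c^2e^{-2c})$, hence $x=ce^{-c}+c^2e^{-2c}+O(c^3e^{-3c})=ce^{-c}+c^2e^{-2c}+o(e^{-2c})$. Substituting this into $(1-x)(1-\tfrac{x}{c})=1-x-\tfrac{x}{c}+\tfrac{x^2}{c}$ and collecting terms yields $(1-x)(1-\tfrac{x}{c})=1-(c+1)e^{-c}-c^2e^{-2c}+O(c^3e^{-3c})$, so for $c\ge c_0(\varepsilon)$ large the $O(c^3e^{-3c})$ error is dominated by $\varepsilon c^2e^{-2c}$ and the bound $|V(C_2(G))|\ge\bigl(1-(c+1)e^{-c}-(1+\varepsilon)c^2e^{-2c}\bigr)n$ holds whp.

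I expect the main obstacle to be the first--moment step: one has to make the passage from ``a branch of the Galton--Watson tree survives forever'' to ``that branch reaches into the giant component of the finite graph $G$'' rigorous with a uniform $o(1)$ error, and in particular to reconcile the purely local branching picture with the fact that $C_2(G)$ is defined through the (global) largest component. By contrast, the concentration step is routine, and the closing Taylor expansion is just bookkeeping.
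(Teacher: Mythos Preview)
The paper does not actually prove this lemma: it is listed among the auxiliary results in Section~\ref{sec-per} with a citation to \cite{FK}, Lemma~2.16, and no argument is given, not even for the ``Additionally'' clause, which is simply asserted. So there is no ``paper's own proof'' to compare your proposal against.

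On its own merits, your sketch is the standard textbook argument and is correct. The identification $x=cq$, with $q$ the extinction probability of the $\mathrm{Poisson}(c)$ Galton--Watson process, is right and gives $xe^{-x}=ce^{-c}$ as you derive; the thinning computation yielding $\pr(v\in C_2)\to 1-e^{-c(1-q)}-c(1-q)e^{-c(1-q)}=(1-x)(1-x/c)$ is exactly the one underlying the cited result; and the second-moment concentration and the closing Taylor expansion are both routine and correctly carried out. You have also put your finger on the one genuinely nontrivial step: making precise the passage from ``at least two branches of the local tree survive to depth $r$'' to ``$v$ lies in the $2$-core of the giant component of the finite graph'', with a uniform $o(1)$ error. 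That is where the work in \cite{FK} (or any equivalent treatment) actually lies, and it requires the standard machinery of coupling the BFS exploration to the branching process together with the fact that all non-giant components are of size $O(\log n)$.
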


We will also use a strong variant of the Azuma-Hoeffding inequality due to Warnke. To introduce it, we first need to define a vertex exposure martingale.

\begin{defini}(vertex exposure martingale, see \cite{ALONSPENCER}, Section 7.1)
Let $f$ be a graph theoretic function, and let $G'\sim G(n,p)$, where $V(G')$ is assumed to be $[n]$, and $G$ is a graph on $[n]$. The \emph{vertex exposure martingale} corresponding to $G$ and $f$ is the sequence of random variables $X_0,...,X_n$, where
$$
X_i \coloneqq \mathbb E \left[ f(G') \mid \forall x,y\in [i]:\{x,y\}\in E(G') \leftrightarrow \{x,y\}\in E(G) \right].
$$
\end{defini}

\begin{lemma} \label{lemma:martingales} {\em (an immediate consequence of \cite{WAR}, Theorem 1.2)}
Let $G\sim G(n,p)$, let $f$ be a graph theoretic function, and let $X_0,X_1,...,X_n$ be the corresponding vertex exposure martingale. Further assume that there is a graph property $\mathcal{P}$ and a positive integer $d$ such that, for every two graphs $G_1,G_2$ on $V$ such that $E(G_1)\triangle E(G_2) \subseteq \{v\}\times V$ (the symmetric difference between $E(G_1)$ and $E(G_2)$) for some $v\in V$, the following holds:
\begin{eqnarray*}
|f(G_1)-f(G_2) | \le
	\begin{cases}
        d & \text{if } G_1,G_2\in \mathcal{P} ;\\
        n & \text{otherwise} .
    \end{cases}
\end{eqnarray*}
Then
$$
\pr \left( (X_n \ge X_0 + t )\ \wedge \ (G\in \mathcal{P}) \right) \le \exp \left( -\frac{t^2}{2n(d+1)^2} \right) .
$$

\end{lemma}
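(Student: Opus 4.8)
The plan is to read the statement as a translation of Warnke's Theorem 1.2 \cite{WAR} into the vertex exposure setting, so that the work consists of setting up the dictionary and then invoking that theorem. First I would realise the vertex exposure martingale as an ordinary Doob martingale over independent coordinates: for $k\in[n]$ let $\xi_k$ record which of the pairs $\{k,w\}$, $w\in[k-1]$, are edges of $G'$, so that $\xi_1,\dots,\xi_n$ are mutually independent, $G'$ is determined by $(\xi_1,\dots,\xi_n)$, the restriction of $G'$ to $[i]$ is a function of $\xi_1,\dots,\xi_i$, and $f$ induces a function $F$ with $f(G')=F(\xi_1,\dots,\xi_n)$. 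With this identification $X_i=\mathbb{E}[F\mid\xi_1,\dots,\xi_i]$ is exactly the Doob martingale of $F$; in particular $X_0=\mathbb{E}[f(G(n,p))]$ and $X_n=f(G)$, so the event whose probability must be bounded is $\{f(G)\ge\mathbb{E} f+t\}\cap\{G\in\mathcal{P}\}$.

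Next I would check that the hypothesis of the lemma is precisely the ``typically bounded differences'' hypothesis of \cite{WAR}, with typical per-coordinate Lipschitz constants $c_k=d$ valid on the good set $\mathcal{P}$ and worst-case constants $d_k=n$. Two graphs that agree in every coordinate except $\xi_k$ differ only in pairs of the form $\{k,w\}$ with $w\in[k-1]$, hence their edge sets have symmetric difference contained in $\{k\}\times V$; applying the lemma's hypothesis with $v=k$, flipping the single coordinate $\xi_k$ therefore changes $f$ by at most $d$ whenever both graphs lie in $\mathcal{P}$ and by at most $n$ in every case. This is the input format of \cite{WAR}, with bad event $\mathcal{B}:=\{G\notin\mathcal{P}\}$, and the standard estimate for Doob martingales (the conditional range of $X_i$ given $\xi_1,\dots,\xi_{i-1}$ is at most the coordinate-$i$ Lipschitz constant) is what is needed to turn these into bounds on the martingale increments.

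I would then apply Warnke's theorem and optimise its free interpolation parameter $\gamma$. If $d\ge n$ then $f$ is globally $d$-Lipschitz under single-coordinate swaps and the claimed bound is already immediate from the Azuma--Hoeffding inequality, so we may assume $d\le n-1$ and set $\gamma=1/(n-d)\in(0,1]$. This makes the effective per-coordinate constant $(1-\gamma)c_k+\gamma d_k=d+1$, so the variance proxy in the denominator of the exponent equals $\sum_{k=1}^n(d+1)^2=n(d+1)^2$, which yields exactly $\exp\big(-t^2/(2n(d+1)^2)\big)$.

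The only genuinely delicate point — everything else being bookkeeping — is that our hypothesis supplies the small Lipschitz bound $d$ only when \emph{both} endpoints of a single-coordinate swap lie in $\mathcal{P}$, not merely one of them, as one might naively feed into a typical-bounded-differences statement. Consequently the proof must invoke \cite{WAR} in the form that controls the deviation of $X_n$ along exposure trajectories that remain inside the good set — the form that carries the conjunct $\{G\in\mathcal{P}\}$ over to the conclusion rather than paying an additive $\pr(G\notin\mathcal{P})$ error term. This is precisely why the lemma is phrased as a bound on $\pr\big((X_n\ge X_0+t)\wedge(G\in\mathcal{P})\big)$ and not on $\pr(X_n\ge X_0+t)$ alone, and I would make sure the cited version of Warnke's inequality delivers this conjoined form (which is how it is standardly applied).
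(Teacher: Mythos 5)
The paper offers no proof of this lemma beyond the citation, and your derivation is exactly the intended one: realize the vertex-exposure martingale as the Doob martingale of the $n$ independent coordinates $\xi_1,\dots,\xi_n$, feed Warnke's typical bounded differences inequality the constants $c_k=d$ (typical) and $d_k=n$ (worst case), and choose $\gamma_k=1/(n-d)$ so that $c_k+\gamma_k(d_k-c_k)=d+1$, which produces the denominator $2n(d+1)^2$. The two caveats you flag at the end --- that the hypothesis gives the Lipschitz constant $d$ only when \emph{both} graphs lie in $\mathcal{P}$, and that one must use the form of Warnke's theorem that retains the conjunction with the good event rather than paying an additive $\pr(G\notin\mathcal{P})$ term --- are precisely the points that need to be checked against the cited statement, and both are accommodated by it.
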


\section{Proof of Theorem \ref{main}} \label{sec:proof}

In this section we present a proof of Theorem \ref{main}. Recall that only the upper bound in the statement requires a proof. Throughout the proof we assume that $c$ is large enough (with respect to $\varepsilon$) without stating this explicitly.

\textbf{Proof outline:} Let $V_1\subseteq V(G)$ be the set of vertices of degree 1. We prove the claim by showing that for a large enough value of $c$, with high probability, if we add to $G$ the edges of a (partial) matching on $V_1$, the resulting graph contains a cycle of length at least $(1-\frac{1}{2}\varepsilon ce^{-c})n$. Since, with high probability, $|V_1|\le (1+\frac{1}{2}\varepsilon)ce^{-c}n$, by removing the added matching from the cycle we get a set of at most $(1+\frac{1}{2}\varepsilon)\frac{1}{2}ce^{-c}n$ paths that cover at least $(1-\frac{1}{2}\varepsilon ce^{-c})n$ vertices. The remaining vertices can now be covered by paths of length 0 to establish the result.

In Section \ref{sec:cores} we study the structure of $G$, and show its typical properties that will help with proving the likely existence of a long cycle. We then describe a construction of an auxiliary graph $G^*$ and a matching $M\subseteq G^*$ whose vertex set is contained in $N_G(V_1)$, such that if $G^*$ contains a Hamilton cycle that contains all the edges of $M$ then $G$ along with the matching on $V_1$, obtained from $M$ by translating an edge between two vertices of $N_G(V_1)$ to an edge between their neighbours in $V_1$, contains a sufficiently long cycle. In short, $G^*$ is obtained by adding $M$, a matching on $N_G(V_1)$, to the subgraph induced by $G$ on a subset of the vertices. The vertex set of $G^*$ is obtained by removing vertices from $V(G)$ that may prove problematic, such as pairs of vertices with very small degree that are within a short distance from one another. This will enable us to claim that, with high probability, the obtained $G^*$ has some good expansion properties, which will be useful in the next section.

Finally, in Section \ref{sec:hamiltonian} we prove that indeed, with high probability, $G^*$ contains such a Hamilton cycle. Here, we will employ adapted variations on methods used in the setting of finding Hamilton cycles in expanding graphs, namely, rotations and extensions. For an overview on this subject we refer the reader to \cite{SURV}, Section 2.3.

\subsection{Properties of $G$ and its 2-core} \label{sec:cores}

Recall that $V_1$ denotes the set of vertices in $G$ of degree 1. Define the following three subsets of $V(G)$.

\begin{eqnarray*}
\sm &=& \sm (G) = \{ v\in V(G) \mid d_G(v,V(G)\setminus N(V_1)) < c/1000 \}; \\
\lrg &=& \lrg (G) = \{ v\in V(G) \mid d_G(v) > 20c \}; \\
\cl &=& \cl (G) = \{ v\in \sm \mid \exists u\in \sm : \text{dist}_G(v,u) \le 4 \} .
\end{eqnarray*}
Notice that $V_1 \subseteq \sm$, and that $\cl$, among other elements, contains all the  vertices of $\sm$ that belong to cycles of length 3 or 4. Informally, vertices of $\sm$ and $\cl$ can potentially be problematic later in the proof, when $G^*$ is constructed, and therefore will require special treatment. In light of this, it will be useful to show that these sets are typically not very big. This, among other typical properties of $G$, is proved in the following lemma.

\begin{lemma} \label{lemma:properties}
With high probability $G$ has the following properties:

\begin{enumerate}[{label=\textbf{(P\arabic*)}}]

    \item
      \label{item:V1-is-small}
      $(1-n^{-0.4})\cdot ce^{-c}\cdot n \le |V_1| \le (1+n^{-0.4})\cdot ce^{-c}\cdot n$;
    \item
      \label{item:SMALL-is-small}
      $|\sm | \le e^{-0.9c}\cdot n$;
    \item
      \label{item:BIG-is-small}
      $|\lrg | \le 10^{-6}\cdot n$;
    \item
      \label{item:CLOSE-is-small}
      $|\cl | \le e^{-1.8c}\cdot n$;
    \item
      \label{item:not-too-many-edges}
      every $U\subseteq V(G)$ with $|U| \le 10^{-5}\cdot n$ has $e(U) < 10^{-4}c\cdot |U|$;
    \item
      \label{item:not-too-few-edges}
      every $U,W\subseteq V(G)$ disjoint with $|U| = 10^{-6}\cdot n$ and $|W|=\frac{1}{5}n$ satisfy: $e(U,W) \ge 10^{-7} c\cdot n$.

      
\end{enumerate}

\end{lemma}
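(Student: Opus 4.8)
The plan is to establish each of the six properties by a first- or second-moment argument, using the auxiliary estimates already collected in Section \ref{sec-per}, and then take a union bound over the (finitely many) events. Property \ref{item:V1-is-small} is immediate from Lemma \ref{lemma:const-degree-vrtcs} applied with $d=1$: the number of degree-$1$ vertices is concentrated around $ce^{-c}n$ with an additive error $o(n^{0.6})$, which is absorbed into the $n^{-0.4}n = n^{0.6}$ slack in the statement. For \ref{item:BIG-is-small}, the probability that a fixed vertex has degree exceeding $20c$ is $\pr(\Bin(n-1,c/n)\ge 20c) \le (e/20)^{20c}$ by Lemma \ref{binom-rv}\ref{binom-rv-1}, which for $c$ large is far smaller than $10^{-6}$; hence the expected size of $\lrg$ is at most $\tfrac12 \cdot 10^{-6} n$, and Markov's inequality gives \ref{item:BIG-is-small} (one may instead use concentration, but Markov suffices for an upper bound with room to spare).

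For \ref{item:SMALL-is-small} I would first bound $|N(V_1)|$: since $|V_1|\le 2ce^{-c}n$ with high probability and each vertex of $V_1$ contributes one neighbour, $|N(V_1)|\le 2ce^{-c}n$ as well. A vertex $v$ lies in $\sm$ only if it has fewer than $c/1000$ neighbours outside $N(V_1)$, i.e.\ $d_G(v) < c/1000 + d_G(v, N(V_1))$. I would split into two cases: either $d_G(v) < c/500$, or $d_G(v,N(V_1)) \ge c/2000$. The first event has probability $\pr(\Bin(n-1,c/n) < c/500) \le \exp(-\Omega(c))$ by the Chernoff bound (Lemma \ref{chernoff}), comfortably below $e^{-0.9c}/2$. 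For the second event, conditioning on a set $S = N(V_1)$ of size $s\le 2ce^{-c}n$, the number of neighbours of a fixed $v\notin S$ inside $S$ is stochastically dominated by $\Bin(s, c/n)$ with mean $cs/n \le 2c^2 e^{-c}$; by Lemma \ref{binom-rv}\ref{binom-rv-1} the probability it reaches $c/2000$ is at most $(2000 e \cdot 2c^2 e^{-c}/c)^{c/2000} = (O(c) e^{-c})^{c/2000} \le e^{-c^2/4000}$ for $c$ large, again far below $e^{-0.9c}/2$. Summing over $v$ bounds $\mathbb E|\sm|$ (after revealing $N(V_1)$) by $e^{-0.9c}n/2$ on the high-probability event that $|V_1|$ is small, and Markov finishes it. Here some care is needed because $N(V_1)$ and the edges at $v$ are not independent; I would handle this by a two-round exposure or, cleanly, by working in $G(n,p)$ with $p$ slightly larger and noting monotonicity, or by the standard trick of taking a union bound over all candidate sets $S$ of the appropriate size weighted by the probability that $N(V_1)=S$ — the entropy cost $\binom{n}{s}$ is $\exp(O(s\log(1/ce^{-c}))) = \exp(O(ce^{-c}n \cdot c))$, which is still dominated by the per-vertex saving $e^{-c^2/4000}$ raised to the relevant power. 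This bookkeeping is the first mildly delicate point.

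For \ref{item:CLOSE-is-small}, a vertex $v\in\sm$ lies in $\cl$ iff there is $u\in\sm$ with $\mathrm{dist}_G(v,u)\le 4$. The natural approach is a union bound over pairs $(v,u)$ and over the $\le 4$-edge paths (or the witnessing short cycle through $v$) joining them: the number of potential paths of length $\le 4$ between $v$ and $u$ is $O(n^3)$, each present with probability $(c/n)^{\le 4}$, so $\pr(\mathrm{dist}_G(v,u)\le 4) = O(c^4/n)$; combined with the (near-)independent events $v\in\sm$ and $u\in\sm$, each of probability $\le e^{-0.9c}$ by the analysis above, one gets $\mathbb E|\cl| \le n^2 \cdot e^{-1.8c} \cdot O(c^4/n) = O(c^4 e^{-1.8c} n)$. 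Strictly this overshoots the target $e^{-1.8c}n$ by a factor $c^4$, so I would instead use the sharper bound that membership in $\sm$ in fact forces $d_G(v) < c/500$ or a large intersection with $N(V_1)$ — both of which carry a $e^{-\Omega(c^2)}$ or $e^{-\Omega(c)}$ factor with enough room to kill the $c^4$ and the short-path cost; alternatively, observe $\sm$ membership restricted to the "low degree" case already gives probability $\le e^{-c}$ with the extra $e^{-0.1c}$ to spare. The only subtlety is the conditional independence between the two $\sm$-events and the connecting-path event, which I would resolve by exposing the connecting paths first (at most $8$ edges) and then the remaining degrees, using that conditioning on few edges changes a $\Bin(n,c/n)$ tail bound negligibly. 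Then Markov's inequality delivers \ref{item:CLOSE-is-small}.

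Properties \ref{item:not-too-many-edges} and \ref{item:not-too-few-edges} are the standard sparse-random-graph edge-distribution estimates. For \ref{item:not-too-many-edges}, union bound over all $U$ with $|U|=u\le 10^{-5}n$ and all ways to choose $10^{-4}cu$ edges inside $U$: the failure probability is at most
$$
\sum_{u\le 10^{-5}n} \binom{n}{u}\binom{\binom{u}{2}}{10^{-4}cu}\left(\frac{c}{n}\right)^{10^{-4}cu}
\le \sum_{u} \left(\frac{en}{u}\right)^{u}\left(\frac{eu^2/2}{10^{-4}cu}\cdot\frac{c}{n}\right)^{10^{-4}cu},
$$
using Lemma \ref{binom-coeff}\ref{binom-coeff-1} twice; the bracketed term in the second factor is $O(u/n) \le 10^{-5}e\cdot 5000 < 1$ wait — more carefully it is $(5000 e u/n)^{10^{-4}cu} \le (10^{-1}e\cdot 5000 \cdot 10^{-4})^{10^{-4}cu}$, hmm; in any event for $u$ up to $10^{-5}n$ the ratio $u/n$ is small enough that, raised to the power $10^{-4}cu$ with $c$ large, it beats the $(en/u)^u$ entropy factor, and the sum over $u$ is $o(1)$; this is a routine computation I will not expand. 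For \ref{item:not-too-few-edges}, $e_G(U,W) \sim \Bin(|U||W|, c/n)$ with mean $\frac{c}{n}\cdot 10^{-6}n\cdot \frac15 n = 2\cdot 10^{-7}cn$, so by the Chernoff bound (Lemma \ref{chernoff}) with $\delta = \tfrac12$ the probability that $e_G(U,W) < 10^{-7}cn$ is at most $\exp(-\tfrac18\cdot 2\cdot 10^{-7}cn) = \exp(-\Omega(cn))$, which survives the union bound over the $\binom{n}{10^{-6}n}\binom{n}{n/5} = \exp(O(n))$ choices of $(U,W)$ once $c$ is a large enough constant.

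\textbf{Main obstacle.} The genuinely delicate parts are \ref{item:SMALL-is-small} and \ref{item:CLOSE-is-small}, specifically handling the dependence between the set $N(V_1)$ (whose definition already involves global information about $G$) and the local degree conditions defining $\sm$, and then between two $\sm$-events and a short connecting path. I expect to manage this via a two-round exposure argument — reveal $V_1$ and $N(V_1)$ (equivalently, reveal which vertices have degree $1$ and their unique neighbours) in a first round, then reveal the remaining edges — together with the observation that the degree tail bounds of Lemma \ref{binom-rv} and Lemma \ref{chernoff} are insensitive to conditioning on a bounded number of edges; the plentiful slack between $e^{-c}$-type per-vertex probabilities and the $e^{-0.9c}$, $e^{-1.8c}$ targets is exactly what gives room to absorb the $\binom{n}{|N(V_1)|}$ entropy factor and the $O(c^4/n)$ short-path factor. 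Everything else is a first- or second-moment estimate plus a union bound over constantly many events.
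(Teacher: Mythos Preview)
Your proposal has a recurring gap that is fatal as written: for \ref{item:SMALL-is-small}, \ref{item:BIG-is-small} and \ref{item:CLOSE-is-small} you bound the expectation of the relevant set size by $\alpha n$ for some small constant $\alpha = \alpha(c)$, and then invoke Markov's inequality. But $c$ is a fixed (large) constant, so Markov only yields $\pr(|\cdot| > \beta n) \le \alpha/\beta$, which is a constant strictly below $1$, not $o_n(1)$. This does not establish any of these properties with high probability. For \ref{item:BIG-is-small} the repair is easy (a second-moment or Chebyshev argument works, since the high-degree indicators are nearly independent; the paper instead union-bounds over candidate sets $U$ of size $10^{-6}n$ with $e(U)+e(U,V\setminus U)\ge 10^{-5}cn$, obtaining an exponentially-in-$n$ small failure probability). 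For \ref{item:SMALL-is-small} the paper likewise bypasses both the dependency issue you flag and the concentration issue by a direct union bound over pairs $(U,W)$ with $|U|=e^{-0.9c}n$ and $|W|=2ce^{-c}n$ having fewer than $c|U|/1000$ edges from $U$ to $V\setminus(U\cup W)$; this again gives failure probability $e^{-\Omega(cn)}$ without ever conditioning on $N(V_1)$.

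For \ref{item:CLOSE-is-small}, however, the missing concentration step is genuinely nontrivial, and this is the substantive hole in your outline. The paper first bounds $\mathbb{E}|\cl| \le e^{-1.85c}n$ by a first-moment computation along the lines you sketch (splitting $\sm$-membership into ``low total degree'' versus ``many neighbours in $N(V_1)$'', and exposing the connecting path first), but then needs to upgrade this to a whp bound. Because $\cl$ depends on $\sm$, which depends on the global set $N(V_1)$, the function $G\mapsto |\cl(G)|$ is not $O(1)$-Lipschitz under single-vertex edge changes; its Lipschitz constant is only bounded by a polynomial in $\Delta(G)$ (in fact, changing the edges at one vertex can alter $\cl$-membership of vertices at distance up to $7$). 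The paper therefore applies Warnke's typical-bounded-differences inequality (Lemma~\ref{lemma:martingales}) to the vertex-exposure martingale, conditioned on the typical event $\{\Delta(G)\le \log n\}$, with Lipschitz bound $d = 3\log^7 n$, to conclude $|\cl| \le \mathbb{E}|\cl| + n^{2/3}$ with high probability. Your proposal contains no concentration mechanism for $|\cl|$ at all, and neither Markov nor a naive Azuma argument will work here. The remaining items \ref{item:V1-is-small}, \ref{item:not-too-many-edges}, \ref{item:not-too-few-edges} are handled essentially as you describe.
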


\begin{proof}

For each of the given properties, we bound the probability that $G\sim G(n,p)$ fails to uphold it.

\begin{itemize}

	\item[\ref{item:V1-is-small}.]
	This is an immediate consequence of Lemma \ref{lemma:const-degree-vrtcs}, with $d=1$.

    \item[\ref{item:SMALL-is-small}.]
      Since $|N(V_1)|\le |V_1|$, assuming $G$ has \ref{item:V1-is-small}, the probability that $|\sm | \ge s \coloneqq e^{-0.9c}n$ is at most the probability that there is a set $U$ of size $s $ and a set $W$ of size $t=2 ce^{-c}\cdot n$ with less than $\frac{c}{1000}\cdot s$ edges between $U$ and $V(G)\setminus (U\cup W)$. By Lemma \ref{binom-coeff}(\ref{binom-coeff-1}) and Lemma \ref{binom-rv}(\ref{binom-rv-2}), the probability for this is at most
      
	\begin{eqnarray*}
	& & \binom{n}{s}\cdot \binom{n}{t}\cdot \pr \left( \Bin \left( s(n-s-t), p \right) < \frac{cs}{1000} \right) \\
	& \le & \left(\frac{en}{s} \right) ^{s} \cdot \left(\frac{en}{t} \right) ^{t} \cdot \frac{cs}{1000}\cdot \pr \left( \Bin \left( s(n-s-t), p \right) = \frac{c}{1000} \cdot s \right) \\
	& \le & O(n)\cdot e^{0.91c\cdot s} \cdot e^{2c\cdot t} \left( \frac{1000es(n-s-t)p}{cs(1-p)} \right) ^{10^{-3}cs}\cdot e^{-s(n-s-t)p} \\
	& \le & e^{0.92c\cdot s}  \cdot (1001e) ^{10^{-3}cs}\cdot e^{-0.99c\cdot s} \\
	& \le & e^{-0.06cs} \\
	& = & o(1).
	\end{eqnarray*}

    \item[\ref{item:BIG-is-small}.]
      If there are more than $10^{-6} n$ vertices with degree more than $20c$, then in particular there is a set $U\subseteq V(G)$ of size $10^{-6}n$ with $e(U)+e(U,V(G)\setminus U) \ge 10^{-5}cn$. By Lemma \ref{binom-coeff}(\ref{binom-coeff-1}) and Lemma \ref{binom-rv}(\ref{binom-rv-1}), the probability of this is at most
      
	\begin{eqnarray*}
	& & \binom{n}{10^{-6}n}\cdot \pr \left( \Bin \left( \binom{10^{-6}n}{2}+10^{-6}\cdot(1-10^{-6})n^2, p \right) \ge 10^{-5}c n \right) \\
	& \le & 2^n \cdot \left( \frac{e\cdot 10^{-6}n^2\cdot p}{10^{-5}c n} \right) ^{10^{-5}c n} \\
	& \le & 2^n \cdot \left( \frac{e}{10} \right) ^{10^{-5}c n} \\
	& = & o(1).
	\end{eqnarray*}

   \item[\ref{item:CLOSE-is-small}.]   
    In order to bound the probability that $G$ fails to satisfy \ref{item:CLOSE-is-small} we aim to apply Lemma \ref{lemma:martingales} with $f(G) = |\cl (G)|$ and $\mathcal{P}$ the property $\Delta (G) \le \log n$. First, we bound $X_0 = \mathbb{E}\left[ f(G) \right]$ from above.
   
   Let $Y_1$ denote the random variable counting cycles in $G$ of length 3 or 4, and $Y_2$ the random variable counting the number of paths in $G$ of length at most 4, such that both their endpoints are in $\sm$. Then, deterministically, $|\cl| \le Y_1 + 2Y_2$, and therefore $\mathbb{E}[\cl] \le \mathbb{E}[Y_1] + 2\mathbb{E}[Y_2]$. It is already known that $\mathbb{E}[Y_1]=O(1)$ (see e.g. \cite{FK}, Theorem 5.4), so it remains to bound $\mathbb{E}[Y_2]$.
   
   By the definition of $\sm$, if $v\in \sm$ then either $v$ has a small total degree in $G$, say, at most $10^{-3}c + \delta$, or $v$ has more than $\delta$ neighbour in $N(V_1)$. We utilize this fact for $\delta=10^{-4}c$ to bound the expectation of $Y_2$.
   
    For a given pair of vertices $u,v\in V(G)$ and a path $P$ of length $\ell \le 4$ between them, the probability that $P\subseteq G$ and $u,v\in \sm$ is at most $p^{\ell}$ times the probability of the event: at least one of the vertices $u,v$ has at least $10^{-4}c$ neighbours in $V(G)\setminus V(P)$ such that each of these neighbours has a neighbour of degree 1, or $e_G(\{u,v\},V(G)\setminus V(P)) \le 2(10^{-3} + 10^{-4})c)$. The probability of this event is at most
   \begin{eqnarray*}
   & & 2\cdot (n^2p^2)^{c/10000}\cdot (1-p)^{c(n-5)/10000} + \pr \left( \Bin \left( 2(n-\ell -1),p \right) \le 2(10^{-3} + 10^{-4})c \right) \\
   & \le & e^{-c^2/20000} + c\cdot \left( 2000e \right) ^{c/450}\cdot e^{-2c} \\
   & \le & e^{-1.95c}.
   \end{eqnarray*} 
   From this we get
   $$
   \mathbb{E}[Y_2] \le \sum _{\ell =1}^4 n^{\ell +1}p^{\ell}e^{-1.95c} \le c^4e^{-1.9c}n,
   $$
   and therefore $\mathbb{E}[|\cl|] \le \mathbb{E}[Y_1] + 2\mathbb{E}[Y_2] \le e^{-1.85c}n$.
   
   In order to apply Lemma \ref{lemma:martingales} it remains to determine the parameter $d$. We claim that setting $d = 3\log ^7 n$ satisfies the conditions of the lemma. Let $G_1,G_2$ be two graphs on $V$ with maximum degree at most $\log n$, that differ from each other only in edges incident to $v\in V$. It suffices to show that $\cl(G_1) \triangle \cl(G_2)$ only contains vertices of distance at most 7 from $v$ in either $G_1$ or $G_2$. Indeed, let $u$ be (without loss of generality) in $\cl(G_1) \setminus \cl(G_2)$. This can occur for two reasons.
   
   \begin{enumerate}
   \item $v$ is part of a path of length at most 4 in $G_1$ between $u$ and another vertex, or a cycle of length at most 4 that contains $u$. This can only occur if $\text{dist}_{G_1}(u,v) \le 3$.
   \item $u$ or a vertex of distance at most 4 from $u$ are in $\sm (G_1) \setminus \sm (G_2)$. Call this vertex $w$ (possibly $w=u$). Going from $G_2$ to $G_1$, this can happen because $\{v,w\} \in E(G_2)$, or because changing the edges of $v$ caused another neighbour of $w$ to be moved into $N_{G_1}(V_1(G_1))$. In any of these cases the distance between $v$ and $w$ is at most 3 in one of the graphs $G_1$ or $G_2$, and therefore the distance between $u$ and $v$ is at most 7 in that graph.
   \end{enumerate}
   
   Now we get by Lemma \ref{lemma:martingales}
   \begin{eqnarray*}
   & & \pr \left( |\cl| > \mathbb{E}[|\cl|] + n^{2/3} \right) \\
   & \le & \pr \left( |\cl| > \mathbb{E}[|\cl|] + n^{2/3} \ \wedge \ \Delta (G) \le \log n \right) + \pr \left( \Delta (G) > \log n \right) \\
   & \le & \exp \left( -\frac{n^{4/3}}{2n(3\log ^7n+1)^2} \right) + n\cdot \binom{n}{
   \log n}\cdot \left( \frac{c}{n} \right) ^{\log n} \\
   & = & o(1),
   \end{eqnarray*}    
    which implies that $|\cl | \le e^{-1.8c}n$ with high probability.
      
    \item[\ref{item:not-too-many-edges}.]
      By Lemma \ref{binom-coeff}(\ref{binom-coeff-1}) and Lemma \ref{binom-rv}(\ref{binom-rv-1}), the probability that there is a set $U\subseteq V(G)$ of size $k\le 10^{-5}n$ that contradicts \ref{item:not-too-many-edges} is at most
      
      $$
       \binom{n}{k}\cdot \pr \left( \Bin \left( \binom{k}{2},p \right) \ge 10^{-4}ck \right) 
       \le \left( \frac{en}{k} \right) ^k \cdot \left( \frac{10^4ek^2p}{2ck} \right) ^{10^{-4}ck} 
       \le \left( \frac{10^4ek}{2n} \right) ^{10^{-5}ck} .
      $$
     
      Observe that if $k\le \log n$ this expression is of order $n^{-\Omega (1)}$. By the union bound the probability that $G$ does not have \ref{item:not-too-many-edges} is at most
      $$
      \sum _{k=1}^{10^{-5}n} \left( \frac{10^4ek}{2n} \right) ^{10^{-5}ck}
      \le \log n \cdot n^{-\Omega (1)} + \sum _{k=\log n}^{10^{-5}n} \left( \frac{e}{20} \right) ^{10^{-5}ck}
      = o(1).
      $$
      
    \item[\ref{item:not-too-few-edges}.]
      Since there are at most $3^n$ ways to choose two disjoint subsets of $V(G)$, by the union bound and by Lemma \ref{chernoff} with parameters $m=2\cdot 10^{-7}\cdot n^2$ and $\delta = \frac{1}{2}$, the probability that there are disjoint $U,W\subseteq V(G)$ with $|U| = 10^{-6}\cdot n$ and $|W|=\frac{1}{5}n$ and less than $10^{-7} c\cdot n$ edges between them is at most $3^n\cdot e^{-\frac{1}{4}\cdot 10^{-7}cn} = o(1).$

\end{itemize}

\end{proof}

Towards constructing $G^*$, following is the definition of the set $\bd = \bd (G)\subseteq V(G)$, whose vertices we would like to exclude from $G^*$. Let $X\subseteq V(G)$ be a minimum size set such that $d_G(v,\sm \cup X) \le 1$ for every $v\notin X$, let $Y \coloneqq \{ v\in V(G) \mid d_G(v) =2, d_G(v,X) = 1 \}$ and set
$$
\bd \coloneqq X \cup Y.
$$
Informally, excluding $\bd$ from our constructed graph $G^*$ (rather than just excluding $\sm$) will ensure that the remaining vertices in the graph all have very few neighbours in $G$ outside of $V(G^*)$.

Observe that $X$, and therefore $\bd $, are well defined: if $X_1,X_2 \subseteq V(G)$ satisfy $\forall v\notin X_i: d_G(v,\sm \cup X_i) \le 1$, $i=1,2$, then $\forall v\notin X_1\cap X_2: d_G(v,\sm \cup (X_1\cap X_2)) \le 1$ also holds, and therefore there is a unique smallest set $X$ that satisfies the condition. 

\begin{lemma} \label{lemma:bad-is-small}
With high probability $| \bd | \le 3\sqrt{c}e^{-c}n$.
\end{lemma}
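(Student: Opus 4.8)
The plan is to bound $|X|$ and $|Y|$ separately, with the bulk of the work going into $|X|$ since $Y$ is controlled by $X$ together with the degree-2 vertices. First note that $|Y| \le |\{v : d_G(v)=2\}|$ is of order $\frac{c^2}{2}e^{-c}n$ with high probability by Lemma \ref{lemma:const-degree-vrtcs}, which is $o(\sqrt{c}e^{-c}n)$ for large $c$; so it suffices to show $|X| \le 2.9\sqrt c\, e^{-c} n$ (say) with high probability. To handle $X$, I would first replace the abstract minimality condition by a more tractable structural description. The defining property is that $X$ is the smallest set with $d_G(v, \sm \cup X)\le 1$ for all $v\notin X$. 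The key observation is that such a minimal $X$ is essentially forced by the edges among $\sm$ together with ``conflict'' vertices having two or more neighbours in $\sm$: a greedy/iterative procedure that repeatedly throws into $X$ any vertex currently having $\ge 2$ neighbours in $\sm \cup X$ terminates, and the resulting set contains $X$; conversely any valid $X$ must absorb all such conflicts. So I would argue $|X|$ is at most the number of vertices reachable, via a short alternating exploration, from $\sm$ — more precisely, $X$ is contained in the union of small components of an auxiliary ``conflict graph''.

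The cleanest route is a witness/encoding argument. If $|X| = k$, then (by minimality) for each $v\in X$ the set $X\setminus\{v\}$ fails the condition, meaning $v$ is ``needed'': either $v\in\sm$, or $v$ has $\ge 2$ neighbours in $\sm \cup X$, or removing $v$ breaks the condition for some neighbour. Unwinding this, $X$ decomposes along a forest-like structure rooted at $\sm$: one can exhibit, for the set $X$, a subgraph $H\subseteq G$ on vertex set $\sm \cup X$ in which every vertex of $X$ has degree $\ge 2$ toward $\sm\cup X$ (this is exactly the ``$2$-core relative to $\sm$'' phenomenon). Hence $|X \setminus \sm|$ is at most the number of vertices $v\notin \sm$ lying in a connected subgraph $T$ with $\sm\cap V(T)\ne\emptyset$, $|V(T)\setminus\sm|=|X\setminus\sm|$, and $e(T)\ge |V(T)\setminus \sm|$ — i.e.\ $T$ has at least as many edges as non-$\sm$ vertices. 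Since by \ref{item:SMALL-is-small} we have $|\sm|\le e^{-0.9c}n$, and since \ref{item:not-too-many-edges} forbids any set of size $\le 10^{-5}n$ from spanning $\ge 10^{-4}c$ edges per vertex, one can bound $|X\setminus\sm|$ as follows: partition the exploration from $\sm$ into ``generations'', where generation $j+1$ consists of vertices with $\ge 2$ neighbours in $\sm\cup(\text{generations } \le j)$. A first-moment computation shows the expected number of vertices with $\ge 2$ neighbours in a fixed set of size $m$ is at most $\binom{n}{2}(c/n)^2 \cdot \frac{m^2}{\text{something}} \approx \frac{c^2 m^2}{2n}$; starting from $m_0=|\sm|\le e^{-0.9c}n$ this is a contraction (ratio $\approx \frac{c^2 m_0}{2n}\le \frac{c^2}{2}e^{-0.9c}\ll 1$), so the generations shrink geometrically and the total is dominated by $m_0$, giving $|X\setminus \sm| = O(e^{-0.9c}n) + O(1)$ paths to worst-case estimates — comfortably below $\sqrt c\, e^{-c}n$ for large $c$. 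One must be a little careful that the ``$2$ neighbours in the previously exposed set'' events are handled by exposing edges in the right order and using Lemma \ref{binom-rv}, and that rare large deviations (a giant conflict component) are killed by \ref{item:not-too-many-edges} applied at the threshold scale $k \asymp e^{-0.9c}n < 10^{-5}n$.

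The main obstacle I anticipate is making the ``$X$ is forced by a relative $2$-core'' claim fully rigorous: the definition of $X$ is non-constructive (it's the intersection of all valid sets), so one has to verify that the greedy saturation procedure actually produces a \emph{valid} set (not just a superset of conflicts), and simultaneously that every valid set contains every vertex the procedure adds. Once that equivalence is pinned down, the enumeration is routine: union-bound over the choice of $T$ (a connected vertex set of size $j$ meeting $\sm$, together with $\ge j$ edges inside it), using Lemma \ref{binom-coeff}(\ref{binom-coeff-1}) for the vertex choices, $\binom{\binom{j}{2}}{j}p^{j}$ for the edges, and the bound $|\sm|\le e^{-0.9c}n$ to pay for attaching to $\sm$; the resulting sum over $j$ converges and is $o(1)$, and the contributing terms have $j = o(\sqrt c\, e^{-c} n)$. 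Combining $|X| \le |\sm| + |X\setminus\sm|$ with the bound on $|Y|$ gives $|\bd| \le 3\sqrt c\, e^{-c} n$ with high probability for $c$ large.
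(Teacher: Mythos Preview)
Your proposal contains a fatal arithmetic error in the very first step. You claim that $|Y| \le |\{v : d_G(v)=2\}|$, which is of order $\tfrac{c^2}{2}e^{-c}n$, and then assert this is $o(\sqrt{c}\,e^{-c}n)$ for large $c$. That comparison goes the wrong way: $\dfrac{\tfrac{c^2}{2}e^{-c}n}{\sqrt{c}\,e^{-c}n} = \tfrac{1}{2}c^{3/2} \to \infty$. So the naive bound on $|Y|$ by the total number of degree-2 vertices already exceeds the target $3\sqrt{c}\,e^{-c}n$, and the rest of the argument cannot recover from this. A related slip appears in your bound on $|X|$: at the end you invoke $|X| \le |\sm| + |X\setminus\sm|$, but $|\sm|$ is only controlled by \ref{item:SMALL-is-small} as $\le e^{-0.9c}n$, and $e^{-0.9c}n/(\sqrt{c}\,e^{-c}n) = e^{0.1c}/\sqrt{c} \to \infty$ as well, so this splitting is also too weak. (Your generation argument, if carried out carefully, should give $|X|$ of order $c^2 e^{-1.8c}n$ directly without the detour through $|\sm|$, but as written the conclusion does not follow.)

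The paper handles both points differently. For $X$, it runs the greedy process you describe but stops after $x = \sqrt{c}\,e^{-c}n$ additions and exhibits a dense witness configuration $(X',Z,S,T)$ whose existence is then ruled out by a direct union bound, exploiting that $Z \subseteq \sm$ forces few edges from $Z$ to $V\setminus N(V_1)$. For $Y$, the paper does \emph{not} bound by all degree-2 vertices; instead it observes that every $v\in Y$ has degree 2 and hence lies in $\sm$, so two vertices of $Y\setminus(\cl\cup X)$ are at distance $>4$ and therefore cannot share their unique $X$-neighbour. This gives an injection $Y\setminus(\cl\cup X)\hookrightarrow X$, hence $|Y\setminus X|\le |X|+|\cl|$ and $|\bd|\le 2|X|+|\cl|$. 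This coupling of $|Y|$ to $|X|$ (rather than to the degree-2 count) is the missing idea in your proposal.
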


\begin{proof}
We first show that, with high probability, $|X|\le x\coloneqq \sqrt{c}e^{-c}n$. Let $A$ denote the complementing event, and observe the vertices of $X$ by order of addition to $X$, according to the following process: while there is a vertex outside $X$ with degree at least 2 into $\sm \cup X$, add the first such vertex (according to some pre-determined order). Assume that $A$ happened, that is, assume that $|X|\ge x$, and let $X'$ be the set containing the first $x$ vertices added to $X$. Then there is a subset $Z\subseteq \sm$ such that $|Z| \le 2x$ and $e(Z\cup X') \ge 2x$. By the definition of $\sm$, this means that the degree of every vertex in $Z$ into $V(G)\setminus N(V_1)$ is at most $\frac{c}{1000}$.

Therefore, assuming $G$ has property $\ref{item:V1-is-small}$, $A$ is contained in the following event: there are sets $X',Z,S,T\subseteq V(G)$ such that the following hold:
\begin{enumerate}
\item $|X'| = x,\ |Z|\le 2x,\ |T|=|S|\le 2ce^{-c}n$;
\item $e(Z\cup X') \ge 2x$;
\item $e(Z,V(G)\setminus (X'\cup Z\cup S\cup T)) \le \frac{c}{1000}\cdot |Z| $;
\item all the vertices in $T$ have degree 1, and their unique neighbour is in $S$, so that no two vertices in $T$ share their neighbour.
\end{enumerate}

Here, $T=V_1$ and $S$ is a set of size $|T|$ that contains $N(V_1)$.

For given sets $X',Z,S,T$ of respective sizes $x,z,s,s$, where the set $T$ is ordered, the probability that all the conditions above are satisfied is at most
\begin{equation}\label{eqn:bad}
p^s(1-p)^{s(n-2)}\cdot \pr \left( \Bin \left( \binom{3x}{2},p \right) \ge 2x \right) \cdot \pr \left( \Bin \left( z(n-z-x-2s),p \right) \le 10^{-3}cz \right)
\end{equation} 

We first use Lemma \ref{binom-rv} to bound from above some of the terms in Equation (\ref{eqn:bad}), under the assumption $s \le 2ce^{-c}n$.

First,
$$
\pr \left( \Bin \left( \binom{3x}{2},p \right) \ge 2x \right) \le \left( \frac{9ex^2p}{4x} \right) ^{2x} \le e^{-1.9cx}.
$$
Next, if $s \le 2ce^{-c}n$ then
$$
\pr \left( \Bin \left( z(n-z-x-2s),p \right) \le 10^{-3}cz \right) \le n\cdot \left( \frac{1000ez\cdot np}{cz\cdot (1-p)} \right) ^{10^{-3}cz} \cdot e^{-0.99cz} \le e^{-0.9cz}.
$$
Altogether we get that the expected number of such sets is at most
\begin{eqnarray*}
&  & \binom{n}{s}\cdot (np\cdot e^{-(n-2)p})^s \binom{n}{x} \binom{n}{z} \cdot \pr \left( \left( \binom{3x}{2},p \right) \ge 2x \right) \cdot \pr \left( \Bin \left( z(n-z-x-2s),p \right) \le 10^{-3}cz \right)\\
& \le & \left( \frac{en}{x} \right) ^x \cdot \left( \frac{en}{z} \right) ^z \cdot \exp \left( 2ce^{-c}n-1.9cx-0.9cz \right) \\
& \le & \exp \left( e^{-c}n+2ce^{-c}n-0.7cx \right) \\
& \le & e^{-cx/2};
\end{eqnarray*}
where in the second inequality we used the fact that $\left( \frac{e\cdot e^{-c}n}{z} \right) ^z \le e^{e^{-c}n}$, since the expression is maximized when $z=e^{-c}n$.

Finally, summing over all $O(n^2) = o(e^{cx/2})$ possibilities for $s,z$, by the union bound, we get that the probability that sets $X',Z,S,T\subseteq V(G)$ satisfying the above-listed conditions exist, and therefore $\pr (A)$, is of order $o(1)$.

To finish the proof we observe that every vertex in $Y\setminus X$ has a unique neighbour in $X$, and that for two vertices $u,v\in Y\setminus (\cl \cup X) \subseteq \sm \setminus \cl$ these unique neighbours in $X$ are distinct, since otherwise $\text{dist}(u,v) \le 2$, a contradiction. It follows that $|Y\setminus X|\le |X|+|\cl |$ and therefore, due to property \ref{item:CLOSE-is-small}, with high probability we have 
$$
|\bd |\le |X|+|Y\setminus X| \le 2|X| + |\cl | \le 3\sqrt{c}e^{-c}n .
$$
\end{proof}

Recall Definition \ref{def:2core} and let $C_2=C_2(G)$. Now set
$$
V(G^*) = V(C_2) \setminus \left( \cl \cup \bd \right) ,
$$
set $M$ to be a pairing on the vertex set $N_G(V_1 \setminus \cl ) \cap V(G^*)$ by order, that is, the smallest vertex of $N_G(V_1 \setminus \cl ) \cap V(G^*)$ is paired to the second smallest and so on, so that all vertices are paired except possibly the largest one, and set $E(G^*) = E_G(V(G^*))\cup E(M)$. Observe that no two vertices in $V(M)$ are connected by an edge of $G^*\setminus M$. Indeed, if two such vertices are connected by an edge, then  all their neighbours in $V_1$ must also be in $\cl$, which implies that these vertices are not in $N_G(V_1 \setminus \cl )$, and therefore not in $V(M)$. Also observe that $d_{G^*}(v,V(G^*)\setminus V(M)) \ge 2$ for every $v\in V(G^*)$. Indeed, if $v\in V(G^*)\setminus \sm$ then it cannot have more than one neighbour in $Y \setminus \cl \subseteq \sm \setminus \cl$, since this implies that its (at least) two neighbours in this set belong to $\cl$, a contradiction. Furthermore, it cannot have more than two neighbours in $X \cup \cl \subseteq X \cup \sm$, since every vertex outside of $X$ has at most one neighbour in $X\cup \sm$, and $V(G^*)\cap X = \emptyset$ since $X\subseteq \bd$. Overall, $v$ cannot have more than three neighbour in $(Y\setminus \cl )\cup X\cup \cl = \bd \cup \cl$, and therefore we get
$$
d_{G^*}(v,V(G^*)\setminus V(M)) \ge d_{G}(v,V(G)\setminus V(M)) -3 \ge \frac{c}{1000}-3\ge 2.
$$
On the other hand, if $v\in V(G^*)\cap \sm$ then $v$ has no neighbours in $\cl$, since otherwise it would be in $\cl$ itself. Additionally, if $v$ has degree exactly 2 into $V(G)\setminus V(M)$ then it no neighbours in $\bd$, since otherwise, by definition, it would be a member of $X\cup Y=\bd$. Finally, if $v$ has degree more than 2 into $V(G)\setminus V(M)$ then it has no neighbour in $Y$ since otherwise it is not a member of $\cl$, and at most one neighbour in $X$, and therefore $v$ has at most one neighbour in $\bd$. In each of these cases we conclude that $d_{G^*}(v,V(G^*)\setminus V(M)) \ge 2$.

\begin{lemma} \label{lemma:Gstar-is-big}
With high probability $|V(G^*)| \ge (1-(1+\frac{1}{4}\varepsilon)\cdot ce^{-c})\cdot n$.
\end{lemma}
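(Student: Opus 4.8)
The plan is a straightforward union-of-errors estimate; no new probabilistic work is needed, since every ingredient has already been established. Since $V(G^*) = V(C_2)\setminus(\cl\cup\bd)$, we have the deterministic inequality
$$|V(G^*)| \ge |V(C_2)| - |\cl| - |\bd|,$$
so it suffices to lower-bound $|V(C_2)|$ and upper-bound the two sets being removed.

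First I would invoke Lemma \ref{lemma:sizeof2core}, applied not with the $\varepsilon$ of the present statement but with a fixed constant such as $\varepsilon'=1$, to obtain that with high probability $|V(C_2)| \ge (1 - (c+1)e^{-c} - 2c^2e^{-2c})n$, provided $c$ is large enough. Next, property \ref{item:CLOSE-is-small} of Lemma \ref{lemma:properties} gives $|\cl| \le e^{-1.8c}n$ with high probability, and Lemma \ref{lemma:bad-is-small} gives $|\bd| \le 3\sqrt{c}\,e^{-c}n$ with high probability. Intersecting these finitely many high-probability events, we get that with high probability
$$|V(G^*)| \ge \left(1 - (c+1)e^{-c} - 2c^2e^{-2c} - e^{-1.8c} - 3\sqrt{c}\,e^{-c}\right)n = \left(1 - ce^{-c}\left(1 + \tfrac1c + 2ce^{-c} + \tfrac{e^{-0.8c}}{c} + \tfrac{3}{\sqrt{c}}\right)\right)n.$$

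It then remains only to observe that the parenthesised correction factor is at most $1 + \tfrac14\varepsilon$ once $c$ is large enough relative to $\varepsilon$: indeed each of $\tfrac1c$, $2ce^{-c}$, $\tfrac{e^{-0.8c}}{c}$ and $\tfrac{3}{\sqrt{c}}$ tends to $0$ as $c\to\infty$, so their sum drops below $\tfrac14\varepsilon$ for all $c\ge c_0(\varepsilon)$, which yields the claimed bound. There is essentially no obstacle in this proof; the only point worth noting is that the dominant error term is the contribution $3\sqrt{c}\,e^{-c}n$ from $|\bd|$, and it is crucial (and the reason Lemma \ref{lemma:bad-is-small} was stated with an $O(\sqrt{c}\,e^{-c}n)$ bound rather than a $\Theta(ce^{-c}n)$ one) that $\sqrt{c} = o(c)$, so this term is genuinely negligible compared with the main term $ce^{-c}n$.
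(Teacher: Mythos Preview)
Your proof is correct and follows essentially the same approach as the paper: both combine the deterministic bound $|V(G^*)|\ge |V(C_2)|-|\cl|-|\bd|$ with Lemma~\ref{lemma:sizeof2core}, property~\ref{item:CLOSE-is-small}, and Lemma~\ref{lemma:bad-is-small}. The only cosmetic difference is that the paper invokes Lemma~\ref{lemma:sizeof2core} with parameter $\tfrac18\varepsilon$ to absorb the lower-order terms directly, while you invoke it with $\varepsilon'=1$ and then check by hand that $\tfrac1c+2ce^{-c}+\tfrac{e^{-0.8c}}{c}+\tfrac{3}{\sqrt c}<\tfrac14\varepsilon$ for large $c$; both are fine.
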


\begin{proof}
The lemma follows immediately from property \ref{item:CLOSE-is-small} in Lemma \ref{lemma:properties}, from the fact that, with high probability, $|V(C_2)| \ge (1-(1+\frac{1}{8}\varepsilon)\cdot ce^{-c})\cdot n$ due to Lemma \ref{lemma:sizeof2core} with $\frac{1}{8}\varepsilon$, and from Lemma \ref{lemma:bad-is-small}.
\end{proof}

\begin{lemma} \label{lemma:M-is-big}
With high probability $|V(M)| \ge (1-\frac{1}{4}\varepsilon )\cdot ce^{-c}n $.
\end{lemma}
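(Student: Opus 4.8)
The plan is to count the vertices of $N_G(V_1\setminus\cl)\cap V(G^*)$: by construction $M$ pairs up all of them except possibly one, so $|V(M)|\ge|N_G(V_1\setminus\cl)\cap V(G^*)|-1$. I would begin with two elementary structural facts. Writing $u(v)$ for the unique neighbour of $v\in V_1$, the map $v\mapsto u(v)$ is injective on $V_1\setminus\cl$: if $v_1\ne v_2$ in $V_1\setminus\cl$ shared a neighbour then $\text{dist}_G(v_1,v_2)=2$ and $v_1,v_2\in V_1\subseteq\sm$, so $v_1\in\cl$, a contradiction. By the same argument no vertex of $N_G(V_1\setminus\cl)$ has degree $1$ (such a vertex would be an $\sm$-witness at distance $1$ putting its $V_1\setminus\cl$-neighbour into $\cl$), so $N_G(V_1\setminus\cl)\cap V_1=\emptyset$. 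Together these give $|N_G(V_1\setminus\cl)|=|V_1\setminus\cl|\ge|V_1|-|\cl|$, and $N_G(V_1\setminus\cl)$ is disjoint from $V_1$.

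The heart of the matter is a cheap bound on how much of $N_G(V_1\setminus\cl)$ is lost in forming $G^*$ (recall $V(G^*)=V(C_2)\setminus(\cl\cup\bd)$). Set $D:=V(G)\setminus V(C_2)$. Since a vertex of degree $1$ cannot lie in the $2$-core, $V_1\subseteq D$; on the other hand Lemma~\ref{lemma:sizeof2core} gives, for $c$ large with respect to $\varepsilon$, that w.h.p. $|D|=n-|V(C_2)|\le(1+\tfrac1{16}\varepsilon)ce^{-c}n$, while by \ref{item:V1-is-small} w.h.p. $|V_1|\ge(1-n^{-0.4})ce^{-c}n$. Hence $|D\setminus V_1|=|D|-|V_1|\le(\tfrac1{16}\varepsilon+o(1))ce^{-c}n$. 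Now $V(G)\setminus V(G^*)\subseteq D\cup\cl\cup\bd$, and since $N_G(V_1\setminus\cl)$ avoids $V_1$ its intersection with $D$ lies in $D\setminus V_1$; therefore the part of $N_G(V_1\setminus\cl)$ outside $V(G^*)$ has size at most $|D\setminus V_1|+|\cl|+|\bd|$, which by \ref{item:CLOSE-is-small} and Lemma~\ref{lemma:bad-is-small} is at most $(\tfrac1{16}\varepsilon+o(1))ce^{-c}n+e^{-1.8c}n+3\sqrt c\,e^{-c}n$.

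Combining the two parts, w.h.p.
$$
|V(M)|\ \ge\ \big(|V_1|-|\cl|\big)-\big(|D\setminus V_1|+|\cl|+|\bd|\big)-1\ =\ 2|V_1|-|D|-2|\cl|-|\bd|-1,
$$
and plugging in the bounds above, using that for $c$ large (relative to $\varepsilon$) one has $3\sqrt c\,e^{-c}\le\tfrac18\varepsilon\cdot ce^{-c}$ and $2e^{-1.8c}\le\tfrac1{100}\varepsilon\cdot ce^{-c}$, the right-hand side is at least $(1-\tfrac14\varepsilon)ce^{-c}n$ once $n$ is large enough.

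I don't expect a genuinely difficult step: the one idea that matters is that $N_G(V_1\setminus\cl)$ is disjoint from $V_1$, which lets the discarded neighbours be controlled by $|D\setminus V_1|$ rather than by $|D|$ itself — the latter being of order $ce^{-c}n$, far too large to be useful. Everything else is arithmetic with the already-established estimates on $|V_1|$, $|\cl|$, $|\bd|$ and $|V(C_2)|$, together with the harmless edge cases (the at most one unpaired vertex, and the $o(1)$-probability event that $C_2=\emptyset$).
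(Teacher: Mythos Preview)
Your proof is correct and follows essentially the same route as the paper: both arguments start from $|N_G(V_1\setminus\cl)|\ge|V_1|-|\cl|$ via injectivity of $v\mapsto u(v)$, and then bound the vertices lost when intersecting with $V(G^*)$ by $|\cl|+|\bd|$ plus the number of non-$2$-core vertices outside $V_1$. The only difference is cosmetic: the paper also excludes $V_0$ and uses Lemma~\ref{lemma:const-degree-vrtcs} to bound $|V(G)\setminus(V_0\cup V_1\cup C_2)|\le 2c^2e^{-2c}n$, whereas you bound $|D\setminus V_1|$ directly by roughly $e^{-c}n$; both are comfortably $o(\varepsilon c e^{-c}n)$ for large $c$, and your version has the virtue of spelling out the injectivity and disjointness facts that the paper leaves implicit.
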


\begin{proof}
Let $V_0 \subseteq V(G)$ be the set of isolated vertices in $G$. By Lemmas \ref{lemma:const-degree-vrtcs} and \ref{lemma:sizeof2core}, with high probability
$$
|V(G)\setminus (V_1 \cup V_0 \cup C_2)| \le 2c^2e^{-2c}n.
$$
By this inequality, and by the high probability bounds on the sizes of $\bd$ and $\cl$ we get from \ref{item:CLOSE-is-small} and Lemma \ref{lemma:bad-is-small}, we get
$$
|V(M)| \ge |V_1| -|\cl |- |\bd | - |V(G)\setminus (V_1 \cup V_0 \cup C_2)| \ge (1-\frac{1}{4}\varepsilon )\cdot ce^{-c}n.
$$
\end{proof}

Recall that $M$ is a (partial) pairing of $N_G(V_1)$. For every $v\in V(M)$ denote by $v'$ the unique neighbour of $v$ in $V_1$, and let $V_1^*\coloneqq \{v' \mid v\in V(M)\}\subseteq V_1 \setminus \cl$. Define a matching $M'$ on $V_1^*$ by matching the pair $v',u'$ for every $\{v,u\}\in E(M)$.

In the next section we show that, with high probability, $G^*$ contains a Hamilton cycle that contains all the edges of $M$, and show that this implies Theorem \ref{main}.

\subsection{$M$-Hamiltonicity of $G^*$} \label{sec:hamiltonian}

Call a path $P\subseteq G^*$ an $M$\emph{-path} if for every edge $e\in M$, either $e\in E(P)$ or $e\cap V(P) = \emptyset$, and similarly define an $M$\emph{-cycle}. Say that a graph containing $M$ is $M$\emph{-Hamiltonian} is it contains a Hamilton $M$-cycle. Note that, by its definition, a Hamilton $M$-cycle must contain all the edges of $M$. For a non-$M$-Hamiltonian subgraph $H\subseteq G$ we say that $e\notin E(H)$ is an $M$\emph{-booster} with respect to $H$ if $H\cup \{e\}$ is $M$-Hamiltonian, or a maximum length $M$-path contained in $H\cup \{e\}$ is strictly longer than a maximum length $M$-path contained in $H$. Finally, say that a graph $\Gamma$ on $V(G^*)$ is an $M$\emph{-expander} if $M\subseteq \Gamma$, and for every $U\subseteq V(G^*)$ with $|U| \le \frac{1}{4}n$ the inequality $\left| N_{\Gamma}(U) \setminus V(M) \right| \ge 2|U|$ holds.

\begin{lemma} \label{lemma:many-boosters}
For every $M$-expander $\Gamma$ on $V(G^*)$ such that $\Gamma$ is not $M$-Hamiltonian, $\binom{V(G^*)}{2}$ contains at least $\frac{1}{32}n^2$ $M$-boosters with respect to $\Gamma$.
\end{lemma}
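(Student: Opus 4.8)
The plan is to adapt the classical "many boosters via rotations and expansion" argument (Pósa's lemma, as in the survey cited) to the $M$-constrained setting. The key point is that in an $M$-expander the edges of $M$ behave like ordinary "forced" edges that one may contract: whenever a rotation or extension is performed, the endpoints of $M$-edges stay glued together, and the expansion condition is stated precisely so that it controls $|N_\Gamma(U)\setminus V(M)|$, i.e. the reachable set after we exclude the $M$-matched vertices that cannot serve as new endpoints of a rotated $M$-path. I would first fix a longest $M$-path $P$ in $\Gamma$; since $\Gamma$ is not $M$-Hamiltonian, $P$ is not Hamiltonian. Let $a$ be one endpoint of $P$ (note $a\notin V(M)$, or more precisely if the endpoint lies in $V(M)$ its $M$-partner is the adjacent vertex on $P$, so the "free" endpoint we rotate around is outside $V(M)$ — this needs a sentence to set up).

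Then I would run the standard Pósa rotation argument keeping $a$ fixed and rotating the other end, but only performing rotations at pivots whose incident $P$-edge is not an $M$-edge (rotating across an $M$-edge is forbidden, which is exactly why the expansion hypothesis discounts $V(M)$). Let $\mathrm{END}$ be the set of vertices reachable as the moving endpoint of an $M$-path of the same length as $P$ via such rotations. The crucial lemma (a Pósa-type statement for $M$-paths): $|N_\Gamma(\mathrm{END})\setminus V(M)| < 2|\mathrm{END}|$, because every vertex in that neighbourhood is either in $\mathrm{END}$ or is one of the $\le 2$ $P$-neighbours of a rotation endpoint. Combined with the $M$-expander property this forces $|\mathrm{END}| > \frac14 n$. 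Then, by symmetry, fixing each $x\in\mathrm{END}$ as the new fixed endpoint and rotating the other side, we obtain for each such $x$ a set $\mathrm{END}_x$ of $>\frac14 n$ vertices $y$ such that there is an $M$-path of length $|P|$ from $x$ to $y$. For any such pair $(x,y)$ with $\{x,y\}\notin E(\Gamma)$, the edge $\{x,y\}$ is an $M$-booster: adding it either closes an $M$-cycle which, by connectivity/expansion, can be turned into a longer $M$-path (the standard argument: a longest-path-plus-one-edge creating a cycle through all of $P$ either is Hamiltonian or, using an edge from the cycle to outside, yields a strictly longer $M$-path), or already yields a longer $M$-path directly.

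Counting: there are $>\frac14 n$ choices of $x\in\mathrm{END}$, and for each at least $\frac14 n$ vertices $y$ with the $M$-path property; this gives $>\frac1{32}n^2$ ordered pairs, hence (after dividing by $2$ and subtracting the at most $e(\Gamma)$ pairs already forming edges — but $e(\Gamma)$ will be $o(n^2)$ here, or one simply notes the bound $\frac1{32}n^2$ already has slack) at least $\frac1{32}n^2$ unordered pairs $\{x,y\}\in\binom{V(G^*)}{2}$ that are $M$-boosters. I would be slightly careful with the constant: $\frac14\cdot\frac14 = \frac1{16}$ ordered pairs, halved gives $\frac1{32}$, and double-counting of the pair $(x,y)$ vs $(y,x)$ among boosters must be handled so the final count is genuinely $\ge\frac1{32}n^2$; a small amount of room ($|\mathrm{END}|>\frac14 n$ strictly, $n$ large) absorbs the loss from removing existing edges.

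The main obstacle — and the part requiring the most care rather than cleverness — is verifying that the rotation operation genuinely stays within the class of $M$-paths: one must check that rotating at a non-$M$ pivot neither breaks an $M$-edge nor strands a vertex of $V(M)$ away from its partner, and that the "longest $M$-path plus chord makes an $M$-cycle, then reopen to a longer $M$-path" step respects $M$ when we pick the edge leaving the cycle (the edge we delete to reopen must itself not be an $M$-edge, which is guaranteed because at each vertex of $V(M)$ its unique $M$-edge stays on the cycle). Once these bookkeeping facts are nailed down, the expansion hypothesis $|N_\Gamma(U)\setminus V(M)|\ge 2|U|$ plugs directly into Pósa's counting and the rest is the textbook argument. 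A secondary, minor obstacle is the boundary behaviour when $|P|$ is already close to $n$, so that $\mathrm{END}$ or $\mathrm{END}_x$ might be "capped" by $|V(G^*)\setminus V(M)|$ rather than by expansion — but since $|V(M)|$ is much smaller than $n$ and $|P|<n$, there is always room, and the expansion threshold $\frac14 n$ is comfortably below $n$.
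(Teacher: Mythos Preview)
Your proposal is correct and follows essentially the same route as the paper: fix a longest $M$-path, perform P\'osa rotations that avoid breaking $M$-edges (the paper phrases this as ``pivot not in $V(M)$''), use the $M$-expander hypothesis $|N_\Gamma(U)\setminus V(M)|\ge 2|U|$ to force $|\mathrm{END}|>n/4$, and then repeat from each new endpoint to count at least $\tfrac12(n/4)^2=\tfrac1{32}n^2$ booster pairs. Your worry about subtracting existing edges of $\Gamma$ is unnecessary: any such pair already in $E(\Gamma)$ would close an $M$-cycle inside $\Gamma$ and yield either $M$-Hamiltonicity or a strictly longer $M$-path, contradicting the assumptions, so all the counted pairs are genuine (non-edge) $M$-boosters.
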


A similar statement to this is made in \cite{AF21}, Section 3.5. For completeness, and since the authors of the aforementioned paper use some tools different than ours in their proof, we add our somewhat simpler version of a proof to this claim.

\begin{proof}

Let $\Gamma \supseteq M$ be an $M$-expander with no Hamilton $M$-cycle. Recall first the definition of a P{\'o}sa rotation of a path and its pivot (see \cite{POS}). Say $(v_0,...,v_{\ell}) = P\subseteq \Gamma$ is a path. If $\{v_i,v_{\ell}\} \in E(\Gamma )$ then we say that the path $(v_0,...,v_{i-1},v_i,v_{\ell},v_{\ell -1},...,v_{i+1})$ is obtained from $P$ by a rotation with fixed point $v_0$ and pivot $v_i$.

Say that a rotation of an $M$-path $P$ is $M$\emph{-respecting} if the pivot is not a vertex in $V(M)$, and observe that if $P'$ is obtained from $P$ by an $M$-respecting rotation then $P'$ is also an $M$-path. Indeed, no new vertices were added to the path in the process, and the unique removed edge cannot be an edge of $M$, since it is incident to the pivot vertex, which is not in $V(M)$. For a path $P$ with $v$ one of its endpoints, denote by $\enm (P,v)$ the set of all endpoints (other than $v$) of paths that can be obtained from $P$ by a sequence of $M$-respecting rotations with fixed point $v$.

We now claim that if $P=(v_0,...,v_{\ell})$ is a maximal $M$-path in $\Gamma$ then
$$
N_{\Gamma}(\enm (P,v_0)) \setminus V(M) \subseteq N_{P}(\enm (P,v_0)) .
$$
Indeed, assume otherwise. Since $P$ is assumed to be a maximal $M$-path, it must be that $N_{\Gamma}(\enm (P,v_0)) \setminus V(M) \subseteq P$. Then there is  $v_i\in N_{\Gamma}(\enm (P,v_0)) \setminus V(M)$, and $u\in \enm (P,v_i)$ such that $\{u,v_i\}\in E(\Gamma )$ and such that $v_{i-1},v_{i+1} \notin \enm (P,v_0)$. Let $P'$ be an $M$-path from $v_0$ to $u$ that can be obtained from $P$ by a sequence of $M$-respecting rotations, and $w$ be the successor of $v_i$ on $P'$. Observe that, since $v_{i-1},v_i,v_{i+1} \notin \enm (P,v_0)$, it must be that $w\in \{v_{i-1},v_{i+1}\}$. But since $v_i\notin V(M)$ and in particular $\{v_i,w\}\notin E(M)$, a rotation of $P'$ with fixed point $v_0$ and pivot $v_i$ is $M$-respecting, implying $w\in \enm (P,v_0)$, a contradiction.

Now, since we assumed that $\Gamma$ is $M$-expanding and $|N_{P}(U)|<2|U|$ for every set $U\subset V(P)$ that includes an endpoint of $P$, this implies that if $P=(v_0,...,v_{\ell})$ is maximal then $|\enm (P,v_0)| > n/4$.

To complete the proof, for $u\in \enm (P,v)$ denote by $P_u$ an $M$-path from $v$ to $u$ obtained by $M$-respecting rotations. We claim that if $P$ is maximal and $v$ is an endpoint of $P$ then any edge of $\binom{V(G^*)}{2}$ between $u\in \enm (P,v)$ and $\enm (P_u,u)$ is an $M$-booster. Indeed, if $e$ is such an edge then $P_u\cup \{e\}$ is a cycle of length $|V(P)|$. If $P$ was a Hamilton $M$-path then we obtained a Hamilton $M$-cycle. Otherwise, since $M$-expansion implies connectivity, there is an outgoing edge $\{x,y\}\in e(\Gamma )$, where $x\in V(P)$ and $y\notin V(P)$. Since $M$ is a matching, at least one of the edges incident to $x$ in $P_u$ is not in $E(M)$, which implies the existence of a strictly longer path in $\Gamma \cup \{e\}$. Since there are at least $\frac{1}{2}\cdot \left( \frac{n}{4} \right) ^2 = \frac{1}{32}n^2$ such edges, this finishes the proof.

\end{proof}

\begin{lemma} \label{lemma:exist-a-booster}
With high probability, for every $M$-expander $\Gamma \subseteq G^*$ that is not $M$-Hamiltonian and has at most $\frac{c}{900}n$ edges, $E(G^*)$ contains an $M$-booster with respect to $\Gamma$.
\end{lemma}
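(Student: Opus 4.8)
The plan is to run a union bound in the spirit of P\'osa's rotation--extension method, modelled exactly on the proof of Lemma \ref{lemma:many-boosters}, over all candidate \emph{triples} $(W,S,F)$: the set $W$ plays the role of $V(G^*)$; the set $S$ plays the role of $N_G(V_1\setminus\cl)\cap V(G^*)$ and, via the ``pairing by order'' rule, determines a candidate matching $M=M(S)$; and $F$ plays the role of $E(\Gamma)\setminus E(M)$, the $G$-edges of the putative sparse $M$-expander. Fix such a triple with $|F|\le\frac c{900}n$ and $|W|>n/2$, and suppose $\Gamma:=M(S)\cup F$ is an $M(S)$-expander on $W$ that is not $M(S)$-Hamiltonian — note this is a \emph{deterministic} property of $(W,S,F)$. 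Then (the proof of) Lemma \ref{lemma:many-boosters}, which uses only that $\Gamma$ is a graph on $W$ with the $M(S)$-expansion property and nothing else about $W$, supplies a fixed set $B\subseteq\binom W2$ of at least $\tfrac1{32}n^2$ pairs that are $M(S)$-boosters with respect to $\Gamma$, none of which lies in $E(M(S))\cup F$. For this triple to exhibit a failure of the lemma we need $F\subseteq E_G(W)$ and every pair of $B$ to be a non-edge of $G$; since these events concern pairwise disjoint sets of potential edges, independence of the edges of $G(n,c/n)$ gives probability at most $p^{|F|}(1-p)^{n^2/32}\le p^{|F|}e^{-cn/32}$.

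The key to making the union bound converge is to avoid summing over all matchings: there are $e^{\Theta(n\log n)}$ partial matchings on $[n]$, which would overwhelm the factor $e^{-cn/32}$. Instead we use that $M$ is a \emph{deterministic function of the set} $S=N_G(V_1\setminus\cl)\cap V(G^*)$, and that both $S$ and $[n]\setminus V(G^*)$ are typically tiny: $|S|\le|N_G(V_1)|\le|V_1|\le 2ce^{-c}n$ w.h.p.\ by \ref{item:V1-is-small}, and $|[n]\setminus V(G^*)|\le 2ce^{-c}n$ w.h.p.\ by Lemma \ref{lemma:Gstar-is-big}. Hence it suffices to range over $W$ with $|[n]\setminus W|\le 2ce^{-c}n$ (in particular $|W|>n/2$ for $c$ large) and over $S\subseteq W$ with $|S|\le 2ce^{-c}n$; the number of such $W$'s, as well as of such $S$'s, is at most $\binom n{\le 2ce^{-c}n}\le e^{O(c^2e^{-c}n)}$, with \emph{no} $\log n$ factor. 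The cost of choosing $F$ while insisting $F\subseteq G$ is $\sum_{m\le cn/900}\binom{\binom n2}m p^m$, which is dominated by its $m=\tfrac c{900}n$ term (because $\tfrac c{900}n<\tfrac{cn}{2}=\binom n2 p/e$ up to lower order, so the summand is increasing on the relevant range), hence at most $(450e)^{cn/900+o(n)}$.

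Combining the three counts with the tail factor, the probability that some such triple exhibits a failure is at most
$$
e^{O(c^2e^{-c}n)}\cdot(450e)^{cn/900+o(n)}\cdot(1-p)^{n^2/32}\le\exp\!\Big(\big(O(c^2e^{-c})+\tfrac{\ln(450e)}{900}\,c-\tfrac c{32}+o(1)\big)n\Big),
$$
which is $o(1)$ for $c$ large, since $\tfrac{\ln(450e)}{900}<\tfrac1{32}$ and $c^2e^{-c}\to 0$. To finish, we add the $o(1)$ probabilities that $G$ violates \ref{item:V1-is-small} or the conclusion of Lemma \ref{lemma:Gstar-is-big}, so that the true pair $\big(V(G^*),\,N_G(V_1\setminus\cl)\cap V(G^*)\big)$ is always among those summed over; and we note that any offending $M$-expander $\Gamma\subseteq G^*$ with at most $\tfrac c{900}n$ edges corresponds to the triple $\big(V(G^*),\,N_G(V_1\setminus\cl)\cap V(G^*),\,E(\Gamma)\setminus E(M)\big)$, whose $F$-part does lie in $E_G(V(G^*))$ because $E(G^*)=E_G(V(G^*))\cup E(M)$ and no $M$-edge is a $G$-edge. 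The step I expect to be the main obstacle is precisely this accounting: a careless union bound over vertex subsets and matchings costs $e^{\Theta(n\log n)}$, which no $e^{-\Theta(n^2p)}$ tail bound can absorb; the whole point is that $M$ is pinned down by an $O(ce^{-c}n)$-sized set and that $V(G^*)$ omits only $O(ce^{-c}n)$ vertices, so those factors shrink to $e^{o(cn)}$ — comfortably beaten by the $(1-p)^{\Theta(n^2)}$ coming from the $\Theta(n^2)$ boosters guaranteed by Lemma \ref{lemma:many-boosters}.
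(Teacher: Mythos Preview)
Your proof is correct and follows essentially the same route as the paper: a union bound over candidate triples $(V(G^*),V(M),E(\Gamma))$, using that $M$ is determined by its vertex set via the pairing-by-order rule (so one never sums over $e^{\Theta(n\log n)}$ matchings), then combining the $p^{|F|}$ cost of realising the edges of $\Gamma$ in $G$ with the $(1-p)^{n^2/32}$ cost of avoiding all the boosters supplied by Lemma~\ref{lemma:many-boosters}. The only difference is that the paper simply bounds the number of $(V(G^*),V(M))$ pairs by $3^n$ (they determine a three-part ordered partition of $[n]$), which already loses to $e^{-cn/32}$ for large $c$; your restriction to typically-sized $W$ and $S$, with the attendant appeal to \ref{item:V1-is-small} and Lemma~\ref{lemma:Gstar-is-big}, is correct but unnecessary extra work.
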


\begin{proof}
By Lemma \ref{lemma:many-boosters}, $\binom{V(G^*)}{2}$ contains at least $\frac{1}{32}n^2$ $M$-boosters with respect to any such expander. We use the union bound over all choices for the triple $\Gamma , V(G^*), V(M)$ to bound the probability that $G^*$ contains such an expander but none of its $M$-boosters, here using the fact that $V(M) \subseteq V(G^*) \subseteq V(G)$, that is, the pair $V(M),V(G^*)$ defines a partition of $V(G)$ into three sets, and therefore there are at most $3^n$ choices for the pair $V(M),V(G^*)$. We get

$$
 \sum _{k=n}^{\frac{c}{900}n} 3^n \cdot \binom{\binom{n}{2}}{k}\cdot p^k \cdot (1-p)^{\frac{1}{32}n^2} 
\le  \sum _{k=n}^{\frac{c}{900}n} 3^n \cdot \left( \frac{en^2p}{2k} \right) ^k \cdot e^{-\frac{c}{32}n} 
\le  cn\cdot 3^n \cdot \left( 450e \right) ^{\frac{c}{900}n} \cdot e^{-\frac{c}{32}n} 
= o(1),
$$
where in the last inequality we used the fact that $\left( \frac{en^2p}{2k} \right) ^k$ is increasing when $k\le \frac{c}{900}n$, and therefore if $k\le \frac{c}{900}n$ then this size is at most $\left( 450e \right) ^{\frac{c}{900}n}$.

\end{proof}

\begin{lemma} \label{lemma:gamma}
With high probability $G^*$ contains an $M$-expander $\Gamma _0$ with at most $\frac{c}{950}n$ edges.
\end{lemma}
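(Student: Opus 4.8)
The plan is to build $\Gamma_0$ as a sparse random-like subgraph of $G^*$ by including $M$ together with a small random sample of the edges of $G^*$, and then verify the expansion condition by a union bound over all potentially ``bad'' sets $U$.

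First I would fix the edge budget. Since $|E(G^*)| \le e_G(V(G)) + |V(M)|/2$, and $e_G(V(G)) \sim cn/2$ whp, the full graph $G^*$ has roughly $cn/2$ edges — too many. So instead I would let $\Gamma_0$ consist of $M$ plus an independent $p'$-random subset of $E_G(V(G^*))$, where $p' = \Theta(1/c)$ is chosen so that the expected number of sampled edges is below $\frac{c}{950}n$; equivalently, I would keep each edge of $E_G(V(G^*))$ with probability $q = \frac{1}{900}$ (say), so that conditioned on $|E_G(V(G^*))| \le \frac{c}{2}(1+o(1))n$ (which holds whp by a Chernoff bound, or follows from \ref{item:not-too-many-edges}-type control) we get $e(\Gamma_0) \le \frac{c}{950}n$ whp with room to spare. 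Because $G \sim G(n, c/n)$, conditioning on $V(G^*)$ (which is a function of $G$, so this requires a little care) the sampled graph on $V(G^*)$ is distributed like $G(|V(G^*)|, qc/n)$ restricted to non-$\bd$, non-$\cl$ vertices — morally $G(n', c'/n)$ with $c' = qc = c/900$, which is still a large constant.

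The core of the argument is then the expansion estimate. I want: whp, every $U \subseteq V(G^*)$ with $|U| \le \frac14 n$ has $|N_{\Gamma_0}(U) \setminus V(M)| \ge 2|U|$. I would split into the usual two regimes. For \emph{small} sets, say $|U| \le \alpha n$ for a small constant $\alpha$, suppose toward contradiction that $|N_{\Gamma_0}(U) \setminus V(M)| < 2|U|$; then $W := U \cup (N_{\Gamma_0}(U)\setminus V(M))$ has size $< 3|U|$ and contains at least $e_{\Gamma_0}(U) - (\text{edges into } V(M))$ edges, but I need a lower bound on edges emanating from $U$. Here I would use that every $v \in V(G^*)$ has $d_{G^*}(v, V(G^*)\setminus V(M)) \ge 2$ — wait, that bound is too weak after sampling. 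The right move is to note that for $v \in V(G^*)\setminus \sm$ we in fact have $d_G(v, V(G)\setminus N(V_1)) \ge c/1000$, so after $q$-sampling such a vertex has $\Theta(c/900)$ neighbours in $\Gamma_0$ whp, and for $v \in V(G^*)\cap\sm$ — whose number is tiny by \ref{item:SMALL-is-small} — I would handle separately, absorbing them since $|\sm \cap V(G^*)| \le e^{-0.9c}n$ is negligible against $\alpha n$. So a set $U$ of small-degree ($\sm$) vertices is ruled out by \ref{item:SMALL-is-small} directly, and a set $U$ with many non-$\sm$ vertices violating expansion would force a subset spanning $\ge \frac{c}{2000}|U|$ edges inside a set of size $3|U|$, contradicting (a sampled version of) \ref{item:not-too-many-edges}, or more cleanly: the probability that some set $W$ of size $\le 3\alpha n$ contains $\ge |W|$ edges in the $q$-sampled graph is, by Lemma~\ref{binom-rv}(\ref{binom-rv-1}) and a union bound $\binom{n}{3\alpha n}\binom{\binom{3\alpha n}{2}}{|W|} (qc/n)^{|W|}$, at most $\big(\text{const}\cdot \alpha c q\big)^{|W|} \cdot (\text{poly})$, which is $o(1)$ for $\alpha$ small enough relative to $1/(cq) = 900/c$. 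For \emph{large} sets $\alpha n \le |U| \le \frac14 n$, I would instead show every such $U$ has a neighbour outside $U \cup V(M)$ in a strong sense: pick $U' \subseteq U$ of size exactly $10^{-6}n$ (possible once $\alpha > 10^{-6}$; choose $\alpha$ accordingly) and $W = V(G^*) \setminus (U \cup V(M))$, which has size $\ge \frac14 n - o(n) > \frac15 n$; then \ref{item:not-too-few-edges} guarantees $e_G(U',W) \ge 10^{-7}cn$ edges in $G$, and a Chernoff/union-bound argument shows that after $q$-sampling at least, say, $10^{-8}cqn \gg 2 \cdot \frac14 n$ of the endpoints in $W$ survive as $\Gamma_0$-neighbours — in particular $|N_{\Gamma_0}(U)\setminus V(M)| \ge \frac14 n + 1 > 2|U|$ would need $|U| < \frac18 n$, so I'd actually want the refined bound $|N_{\Gamma_0}(U)\setminus V(M)| \ge$ ``most of the complement,'' which the edge-count $10^{-7}cn$ edges to a $\frac15 n$ set gives once $c$ is large (each of $\ge \frac15 n - o(n)$ vertices of $W$ receives in expectation $\gg 1$ sampled edges from $U'$, so whp $\ge \frac14 n$ of them are hit).

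The main obstacle is the conditioning: $V(G^*)$, $V(M)$, $\bd$, $\cl$, $\sm$ are all determined by $G$ itself, so I cannot simply declare ``sample each edge of $E_G(V(G^*))$ independently'' and treat it as $G(n',c'/n)$. I would resolve this by generating $G$ as the union of two independent copies $G_1 \sim G(n, c_1/n)$ and $G_2 \sim G(n, c_2/n)$ with $c_1 + c_2 \approx c$ (more precisely $1 - c_1/n)(1-c_2/n) = 1 - c/n$), using $G_1$ to define all the structure ($V(G^*), M, \bd, \ldots$ — noting these are ``monotone-ish'' but really I'd just redo the Lemma~\ref{lemma:properties} estimates for $G_1$ with $c_1 = c - O(1)$, still a large constant) and then taking $\Gamma_0 = M \cup (E(G_2) \cap E_G(V(G^*)))$, so that conditioned on everything from $G_1$, the edges of $\Gamma_0$ inside $V(G^*)$ are a genuine $G(|V(G^*)|, c_2/n)$ — and I would pick $c_2 = c/950$ (a large constant) so that $e(\Gamma_0) \le \frac{c}{950}|V(G^*)| \cdot (1+o(1)) \le \frac{c}{950}n$ whp, while $c_2$ is still large enough for the small-set expansion bound above to go through with $\alpha = 10^{-6}$ and for the large-set bound to give enough surviving neighbours. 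A secondary subtlety is the requirement $M \subseteq \Gamma_0$ with $V(M) \subseteq V(G^*)$, already arranged by construction, and the fact that the expansion must avoid $V(M)$ in the image — handled above by only ever counting neighbours outside $V(M)$, which is harmless since $|V(M)| = O(ce^{-c}n) = o(n)$ and the degree lower bounds $d_{G^*}(v, V(G^*)\setminus V(M)) \ge 2$ (hence $\ge c/1000 - 3$ for $v \notin \sm$) are stated into $V(G^*)\setminus V(M)$ exactly for this reason.
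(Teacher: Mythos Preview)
Your construction has a genuine gap at the level of \emph{singleton} sets. The $M$-expander condition demands $|N_{\Gamma_0}(\{v\})\setminus V(M)|\ge 2$ for every $v\in V(G^*)$, i.e.\ minimum degree at least $2$ into $V(G^*)\setminus V(M)$. Under either of your schemes --- independent $q$-sampling of $E_G(V(G^*))$, or a two-round exposure with $\Gamma_0\setminus M$ taken from $G_2\sim G(n,c_2/n)$ with $c_2=c/950$ --- the degree of each vertex in $\Gamma_0\setminus M$ is binomial with a \emph{constant} mean (roughly $c_2$, or $q\cdot c/1000$ for the edges that matter). Hence a positive proportion of the $n$ vertices (about $e^{-c_2}n$, which for fixed $c$ is $\Theta(n)$) will have degree $0$ or $1$ into $V(G^*)\setminus V(M)$, and for any such $v$ the set $U=\{v\}$ already violates expansion. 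Your small-set argument (``too many edges in a set of size $3|U|$'') tacitly assumes every vertex of $U$ contributes $\Omega(c)$ edges, which is exactly what fails here; and the remark that $|\sm|$ is tiny does not help, since the low-degree vertices in $\Gamma_0$ are not the $\sm$-vertices of $G$ but an entirely new (and much larger) set created by the sampling.

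The paper's construction is different in precisely this respect, and also sidesteps the conditioning issue you raise. It introduces \emph{fresh} randomness on top of $G$: for each $v\in V(G^*)\setminus\sm$ it picks a uniformly random subset $E_v\subseteq E_{G^*}(v,V(G^*)\setminus V(M))$ of fixed size $c/1000$, while for $v\in\sm$ it takes $E_v$ to be all of $E_{G^*}(v,V(G^*)\setminus V(M))$ (which has size $\ge 2$ by the structural analysis preceding Lemma~\ref{lemma:Gstar-is-big}). Thus minimum degree $\ge 2$ into $V(G^*)\setminus V(M)$ is enforced \emph{deterministically}, and the edge count is at most $\frac{c}{1000}n+\frac12 n\le \frac{c}{950}n$. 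With this in place the small-set case ($|U\setminus\sm|\le 2\cdot 10^{-6}n$) becomes deterministic from \ref{item:not-too-many-edges} and the $\cl$-free structure, and only the large-set case requires the random choice of the $E_v$'s, handled via \ref{item:not-too-few-edges} and a union bound over pairs $(U',W)$. Since this extra randomness is independent of $G$, one only needs positive probability of success to conclude existence --- no two-round exposure is needed.
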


\begin{proof}
We describe a construction of a random subgraph $\Gamma _0$ of $G^*$ with at most $\frac{c}{950}n$ edges and prove that if $G$ has properties \ref{item:V1-is-small}-\ref{item:not-too-few-edges} then the constructed subgraph is an $M$-expander with positive probability, which implies existence.

Construct a random subgraph $\Gamma _0$ of $G^*$ as follows. For every $v\in V(G^*)$ set $E_v$ to be $E_{G^*}(v,V(G^*)\setminus V(M))$ in the case $v\in \sm $, and otherwise set $E_v$ to be a subset of $E_{G^*}(v,V(G^*)\setminus V(M))$ of size $\frac{c}{1000}$, chosen uniformly at random. The random subgraph $\Gamma_0$ is the $G^*$-subgraph with edge set $M\cup \bigcup _{v\in V(G^*)}E_v$. Observe that, since the minimum degree of a vertex into $V(G^*)\setminus V(M)$ in $G^*$ is at least 2, this is also true for a graph $\Gamma_0$ drawn this way, and that $e(\Gamma_0 ) \le \frac{c}{1000}n+\frac{1}{2}n\le \frac{c}{950}n$.

We bound from above the probability that $\Gamma_0$ contains a subset $U$ with at most $n/4$ vertices with less than $2|U|$ neighbours in $V(G^*)\setminus V(M)$. Let $|U|=k\le \frac{n}{4}$, and denote $U_1 = U\cap \sm ,U_2 = U\setminus U_1$ and $k_1,k_2$ the sizes of $U_1,U_2$ respectively. Observe that $V(\Gamma_0 ) \cap \cl = \emptyset$ implies that \emph{(i)} every vertex in $U_2$ has at most one neighbour in $U_1 \cup N_G(U_1) \cup V(M) \subseteq \sm \cup N_G(\sm )$, and therefore $|N_{\Gamma_0}(U_2) \cap (U_1 \cup N_{\Gamma_0}(U_1) \cup V(M))| \le k_2$; and \emph{(ii)} distinct vertices in $\sm (G)$ have non-intersecting neighbourhoods, and therefore $|N_{\Gamma_0}(U_1) \setminus V(M)| \ge 2k_1$.

First we show that if $k_2 \le 2\cdot 10^{-6}n$ then $|N_{\Gamma_0}(U) \setminus V(M)|\ge 2|U|$ with probability 1. Indeed, in this case $|N_{\Gamma_0}(U_2)\setminus V(M)| \ge 4k_2$. Assume otherwise, then $U_2 \cup (N_{\Gamma_0}(U_2)\setminus V(M))$ is a set of size at most $5k_2 \le 10^{-5}n$ which spans at least $\frac{c}{1000}\cdot k_2 - e(U_2)$ edges. But by \ref{item:not-too-many-edges}, $U_2$ spans at most $10^{-4}ck_2$ edges, and $U_2 \cup (N_{\Gamma_0}(U_2)\setminus V(M))$ spans at most $5\cdot 10^{-4}ck_2$ edges, so we get $$10^{-3}c\cdot k_2 - 10^{-4}ck_2 \le 5\cdot 10^{-4}ck_2,$$ a contradiction. Now
\begin{eqnarray*}
|N_{\Gamma_0}(U) \setminus V(M)| & \ge & |N_{\Gamma_0}(U_1)\setminus (U_2\cup V(M))| + |N_{\Gamma_0}(U_2)\setminus (N_{\Gamma_0}(U_1) \cup U_1 \cup V(M)) | \\
& \ge & 2 k_1 - k_2 + 4k_2 - k_2 \\
& \ge & 2k_1+2k_2=2|U|.
\end{eqnarray*}

Now assume that $2\cdot 10^{-6}n\le k_2 \le \frac{1}{4}n$. We show that $|N_{\Gamma_0}(U) \setminus V(M)| \ge 2|U|$ for every $U\subseteq V(G^*)$ with $|U|\le \frac{n}{4}$ with positive probability. Suppose otherwise, then $|V(G^*)\setminus (U \cup N_{\Gamma_0}(U) \cup V(M))| \ge \frac{1}{5}n$. In particular, by \ref{item:BIG-is-small} there are disjoint sets $U'\subseteq U_2 \setminus \lrg $ and $W\subseteq V(G^*)\setminus (U \cup N_{\Gamma_0}(U) \cup V(M))$, of sizes $10^{-6}n$ and $\frac{1}{5}n$ respectively, such that $e_{\Gamma_0}(U',W)=0$. Observe that by \ref{item:not-too-few-edges}, $e_{G^*}(U',W) \ge e_{G}(U',W) \ge 10^{-7}cn$. For a given pair of subsets $U',W$, by Lemma \ref{binom-coeff}(\ref{binom-coeff-2}) the probability for this is at most

\begin{eqnarray*}
\prod_{u\in U'}\pr (e_{\Gamma_0}(u,W)=0) & \le & \prod _{u \in U'} \frac{\binom{e_{G^*}(u,V(G^*)\setminus V(M)) - e_{G^*}(u,W)}{10^{-3}c}}{\binom{e_{G^*}(u,V(G^*)\setminus V(M))}{10^{-3}c}} \\
& \le & \prod _{u \in U'} e^{-\frac{c}{1000}\cdot \frac{e_{G^*}(u,W)}{d_{G^*}(u)}}\\
& \le & \exp \left( {-\frac{c}{1000}\cdot \frac{e_{G^*}(U',W)}{20c}} \right) \\
& \le & \exp \left( -10^{-12}\cdot cn \right)
\end{eqnarray*}

Since there are at most $3^n$ pairs of subsets $U',W$, by the union bound the probability that two subsets of this size with no edges between them in $\Gamma_0$ exist is at most $3^n\cdot \exp \left( -10^{-12}\cdot cn \right) = o(1)$, for large enough $c$. Consequently, the random subgraph $\Gamma_0$ is an $M$-expander with probability $1-o(1)$, implying that $G^*$ contains a sparse $M$-expander, as claimed.

\end{proof}

\begin{lemma}
With high probability $G^*$ is $M$-Hamiltonian.
\end{lemma}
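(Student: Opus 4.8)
The plan is to run the standard booster/rotation-extension argument for Hamiltonicity, but in the $M$-respecting setting, combining the ingredients already assembled. By Lemma~\ref{lemma:gamma}, with high probability $G^*$ contains an $M$-expander $\Gamma_0 \subseteq G^*$ with at most $\frac{c}{950}n$ edges. Starting from $\Gamma_0$, I would build an increasing sequence of subgraphs $\Gamma_0 \subseteq \Gamma_1 \subseteq \cdots \subseteq \Gamma_m$ of $G^*$, where at each step, if $\Gamma_i$ is not $M$-Hamiltonian, we pick an $M$-booster $e_i \in E(G^*)$ with respect to $\Gamma_i$ (which exists with high probability by Lemma~\ref{lemma:exist-a-booster}, provided $\Gamma_i$ still has few enough edges) and set $\Gamma_{i+1} = \Gamma_i \cup \{e_i\}$. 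Since each booster either completes a Hamilton $M$-cycle or strictly increases the length of a longest $M$-path, and an $M$-path has at most $n-1$ edges, after at most $n$ steps the process terminates in an $M$-Hamiltonian graph; in particular $|E(\Gamma_m)| \le \frac{c}{950}n + n \le \frac{c}{900}n$, so Lemma~\ref{lemma:exist-a-booster} applies at every stage.

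The one subtlety is that adding a booster must preserve the $M$-expander property so that Lemma~\ref{lemma:exist-a-booster} continues to apply at each step. This is immediate: adding edges to a graph cannot decrease any neighbourhood, so if $M \subseteq \Gamma_i$ and $|N_{\Gamma_i}(U)\setminus V(M)| \ge 2|U|$ for all $U$ with $|U|\le n/4$, the same holds for $\Gamma_{i+1} \supseteq \Gamma_i$; and $M \subseteq \Gamma_0 \subseteq \Gamma_{i+1}$. Hence every $\Gamma_i$ in the sequence is an $M$-expander with at most $\frac{c}{900}n$ edges, and the hypotheses of Lemma~\ref{lemma:exist-a-booster} are met until we reach an $M$-Hamiltonian graph. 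Since $\Gamma_m \subseteq G^*$ and $\Gamma_m$ is $M$-Hamiltonian, $G^*$ itself contains a Hamilton $M$-cycle (a Hamilton cycle of $\Gamma_m$ through all edges of $M$ is a Hamilton cycle of $G^*$ through all edges of $M$, as $V(\Gamma_m) = V(G^*)$ and $E(\Gamma_m) \subseteq E(G^*)$).

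I expect no serious obstacle here: the heavy lifting has already been done in Lemmas~\ref{lemma:many-boosters}, \ref{lemma:exist-a-booster}, and \ref{lemma:gamma}. The only things to be careful about are (i) conditioning correctly — all three high-probability events (existence of the sparse $M$-expander $\Gamma_0$, and the "every sparse non-$M$-Hamiltonian $M$-expander has a booster in $E(G^*)$" event) should be intersected up front, after which the construction is deterministic; and (ii) bookkeeping on the edge count, to confirm $|E(\Gamma_i)| \le \frac{c}{900}n$ throughout, which follows since we add at most $n$ edges to a graph with at most $\frac{c}{950}n$ edges and $\frac{1}{950} + \frac{1}{c} \le \frac{1}{900}$ for $c$ large. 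With these two points settled the proof is just the termination argument for the booster process. Finally, I would note that this lemma, together with the reductions in the proof outline and Lemmas~\ref{lemma:Gstar-is-big}, \ref{lemma:M-is-big} relating a Hamilton $M$-cycle of $G^*$ back to a long cycle in $G + M'$, completes the proof of Theorem~\ref{main}.
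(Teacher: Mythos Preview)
Your proposal is correct and follows essentially the same route as the paper: start from the sparse $M$-expander $\Gamma_0$ supplied by Lemma~\ref{lemma:gamma}, repeatedly add $M$-boosters from $E(G^*)$ via Lemma~\ref{lemma:exist-a-booster}, and terminate in at most $n$ steps. Your explicit remarks on preservation of the $M$-expander property under edge addition and on the edge-count bookkeeping ($\tfrac{c}{950}n + n \le \tfrac{c}{900}n$) are details the paper leaves implicit but that are indeed needed for the iteration to go through.
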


\begin{proof}
This is an immediate consequence of Lemmas \ref{lemma:exist-a-booster}, \ref{lemma:gamma}. By Lemma \ref{lemma:gamma}, with high probability, $G^*$ contains an $M$-expander $\Gamma _0\subseteq G^*$ with at most $\frac{c}{950}n$ edges. By Lemma \ref{lemma:exist-a-booster}, with high probability, if $\Gamma _0$ is not $M$-Hamiltonian then $E(G^*)$ contains an $M$-booster $e_1$ with respect to $\Gamma _0$, an $M$-booster $e_2$ with respect to $\Gamma _0 \cup \{e_1\}$ and so on. After adding at most $n$ such $M$-boosters, the resulting subgraph is already $M$-Hamiltonian, and consequently so is $G^*$.
\end{proof}

To finish our proof, we now claim the $G^*$ being $M$-Hamiltonian with high probability implies Theorem \ref{main}. Indeed, if $C$ is a Hamilton $M$-cycle in $G^*$, then since $E(G^*)\subseteq E(G\cup M)$ it is an $M$-cycle in $G\cup M$ of length $|V(G^*)|$. Therefore, the cycle $C'$ obtained by replacing every edge $\{u,v\}$ of $M$ with the 3-path $(u,u',v',v)$ is contained in $G\cup M'$, and has length
$$
|V(C')| = |V(C)| + |V(M')| = |V(G^*)| + |V(M)|,
$$
which, by Lemmas \ref{lemma:Gstar-is-big} and \ref{lemma:M-is-big}, is at least
$$
(1-(1+\varepsilon /4)\cdot ce^{-c})\cdot n + (1-\varepsilon /4)\cdot ce^{-c}n = \left( 1-\frac{1}{2}\varepsilon ce^{-c} \right) \cdot n  .
$$

As discussed earlier, by removing $M'$ from this cycle we are left with at most $(1+\frac{1}{2}\varepsilon )\frac{1}{2}ce^{-c}n$ vertex disjoint paths that cover at least $\left( 1-\frac{1}{2}\varepsilon ce^{-c} \right) \cdot n$ vertices, and therefore by using additional $\frac{1}{2}\varepsilon ce^{-c} n$ paths of length 0 to cover the remaining vertices we get a disjoint path covering of $G$ with at most $(1+\varepsilon )\frac{1}{2}ce^{-c}n$ paths, thus finishing the proof. \hfill $\square$

\paragraph*{Acknowledgements.} We would like to thank the anonymous referees for this paper for their careful reading and helpful remarks.


\begin{thebibliography}{99}

\bibitem{ALONSPENCER} N. Alon and J. H. Spencer, \textbf{The Probabilistic Method}, John Wiley \& Sons, 2016.

\bibitem{AF21} M. Anastos and A. Frieze, \textit{A scaling limit for the length of the longest cycle in a sparse random graph},  Journal of Combinatorial Theory Series B 148 (2021), 184--208.

\bibitem{B84} B. Bollob{\'a}s, \textit{The evolution of sparse graphs}, Graph Theory and Combinatorics, Academic Press, London (1984), 35--57.


\bibitem{F86} A. Frieze, \textit{On large matchings and cycles in sparse random graphs}, Discrete Mathematics 59 (1986), 243--256.

\bibitem{FK} A. Frieze and M. Karo{\'n}ski, \textbf{Introduction to random graphs}, Cambridge University Press, 2015.

\bibitem{CHER} W. Hoeffding, \textit{Probability inequalities for sums of bounded random variables}, Journal of the American Statistical Association 58 (1963), 13--30.

\bibitem{KS83} J. Koml{\'o}s and E. Szemer{\'e}di, \textit{Limit distributions for the existence of Hamilton circuits in a random graph}, Discrete Mathematics 43 (1983), 55--63.

\bibitem{SURV}  M. Krivelevich. Long paths and Hamiltonicity in random graphs. In: \textit{Random Graphs,
Geometry and Asymptotic Structure}, N. Fountoulakis and D. Hefetz, Eds., London Mathematical Society Student Texts 84, Cambridge University Press (2016), 4--27.

\bibitem{NSW} R. Nenadov, B. Sudakov and A. Z. Wagner, \textit{Completion and deficiency problems}, Journal of Combinatorial Theory Series B 145 (2020), 214--240.

\bibitem{POS} L. P{\'o}sa, \textit{Hamiltonian circuits in random graphs}, Discrete Mathematics 14 (1976), 359--364.

\bibitem{SUDA} B. Sudakov. Robustness of graph properties. In: \textit{Surveys in Combinatorics 2017}, London Mathematical Society Lecture Note Series 440, Cambridge University Press (2017), 372--408.

\bibitem{WAR} L. Warnke, \textit{On the method of typical bounded differences},  Combinatorics, Probability and
Computing 25 (2016), 269--299.

\end{thebibliography}
\end{document}